\documentclass[a4paper]{article}
\usepackage{amssymb}
\usepackage{amsmath}
\usepackage{amsthm}
\usepackage{hyperref}
\usepackage{enumerate}
\usepackage{mathtools}
\usepackage{url}
\usepackage[T1]{fontenc}
\usepackage{tikz}

\usetikzlibrary{patterns}

\newtheorem{Theorem} {Theorem} [section]
\newtheorem{Proposition} [Theorem] {Proposition}
\newtheorem{Assumption} [Theorem] {Assumption}

\newtheorem{Lemma} [Theorem] {Lemma}
\newtheorem{Corollary} [Theorem] {Corollary}

\newtheorem{Notation} [Theorem] {Notation}

\newcommand{\Ff}{{\mathbb F}}

\newcommand{\Rf}{{\mathbb R}}

\newcommand{\cL}{{\mathcal L}}

\newcommand{\cP}{{\mathcal P}}

\newcommand{\PG}{\mathrm{PG}}

\newcommand{\wt}{\mathrm{wt}}

\newcommand{\rank}{\mathrm{rank}}

\newcommand{\<}{\langle}
\renewcommand{\>}{\rangle} 
\renewcommand{\phi}{\varphi} 
\newcommand{\gauss}[2]{\genfrac{[}{]}{0pt}{}{#1}{#2}}

\title{The classification of Boolean degree $1$ functions in high-dimensional finite vector spaces}
\author{
 Ferdinand Ihringer
}
\date{26 May 2024}

\begin{document}
\maketitle

\begin{abstract}
  We classify the {\it Boolean degree $1$ functions}
  of $k$-spaces in a vector space of dimension $n$
  (also known as {\it Cameron-Liebler classes})
  over the field with $q$ elements for $n \geq n_0(k, q)$.
  This also implies that two-intersecting sets with
  respect to $k$-spaces do not exist for $n \geq n_0(k, q)$.
  Our main ingredient is the Ramsey theory for geometric lattices.
\end{abstract}


\section{Introduction}

A (real) Boolean degree $d$ function on the $n$-dimensional
hypercube $\{0,1\}^n$ is a real multivariate degree $d$ polynomial
$f: \{ 0,1 \}^n \rightarrow \{ 0, 1 \}$.
The classification of Boolean degree $1$ functions on the hypercube
is easy. A short calculation shows that $f$ depends on at most one
variable, that is either $f(x) = 0$, $f(x) = 1$, $f(x) = x_i$,
or $f(x) = 1 - x_i$ for some $i \in \{ 1, \ldots, n \}$.
For a set $A$, let $\binom{A}{b}$ denote the family of
$b$-subsets of $A$.
The {\it Johnson scheme} (or {\it slice of the hypercube})
$J(n, k)$ consists of $\binom{\{1, \ldots, n \}}{k}$, cf.~\cite{FI2019}.
Boolean degree $d$ functions on $J(n, k)$ are defined
the same way as on the hypercube $\{ 0, 1 \}^n$,
namely as a degree $d$ polynomial
$f: \binom{\{1, \ldots, n \}}{k} \rightarrow \{ 0, 1 \}$.
The existence of a Boolean degree $d$ function $f$
is equivalent to the existence of a weight function $\wt: \binom{\{ 1, \ldots, n \}}{d}
\rightarrow \Rf$ such that $\wt(S) := \sum_{T \in \binom{S}{d}} \wt(T)
\in \{ 0, 1 \}$ for all $S \in \binom{\{ 1, \ldots, n \}}{k}$.
Boolean degree $1$ functions on the Johnson scheme are
essentially the same as on the hypercube and have been classified
multiple times, each of \cite{Filmus2016,FI2019,Meyerowitz1992}
contains a (reasonably) short proof. Boolean degree $1$ functions have also been
classified for various other posets,
most prominently for the symmetric group \cite{EFP2011}.
See \cite{DFLLV2020} for a unified presentation of some techniques.

Our aim here is the classification of Boolean degree $1$
functions in a natural $q$-analog of the
Johnson scheme $J(n, k)$, the {\it Grassmann scheme} $J_q(n, k)$.
For a vector space $A$, let $\gauss{A}{b}$ denote the family
of $b$-subspaces of $A$. The Grassmann scheme $J_q(n, k)$
consists of $\gauss{V}{k}$, where $V$ is an $n$-dimensional
vector space over the field with $q$ elements.
A Boolean degree $d$ function on $J_q(n, k)$ corresponds
to a weighting $\wt: \gauss{V}{d} \rightarrow \Rf$
such that $\wt(S) := \sum_{T \in \gauss{S}{d}} \wt(T) \in \{ 0, 1 \}$
for all $S \in \gauss{V}{k}$.
We will use the following convention throughout the text:
\begin{Notation}\label{not:boolean_to_set}
  Identify a Boolean function $f: \gauss{V}{k} \rightarrow \{ 0, 1 \}$
  with the family $Y$ of $k$-spaces $S$ of $V$ which satisfy $f(S) = 1$.
\end{Notation}
Boolean degree $1$ functions on $J_q(n, k)$ are usually
called {\it Cameron-Liebler class-es} (or {\it Cameron-Liebler line classes} if $k=2$).
In the corresponding literature, Cameron-Liebler classes are commonly seen as
sets.
The following lists the {\it trivial examples} for a Boolean degree $1$ function
on $J_q(n, k)$ for $n \geq 2k$:
\begin{enumerate}[(I)]
 \item The empty set.
 \item All $k$-spaces through a fixed $1$-space $P$, a {\it point-pencil}.
 \item All $k$-spaces in a fixed hyperplane $H$ of $V$.
 \item The union of the previous two examples when $P \nsubseteq H$.
 \item The complement of any of the examples (I) to (IV).
\end{enumerate}
Let $B(n, k, q)$ denote the statement that
all examples in $V$ (for given $n, k, q$) are trivial examples.
In 1982, Cameron and Liebler conjectured that $B(4, 2, q)$ holds,
see \cite{CL1982}.
Bruen and Drudge disproved $B(4, 2, q)$ for all odd $q$
in \cite{BD1999,Drudge1998} by providing another
example. Nowadays examples disproving
$B(4, 2, q)$ for $q=4$, $q \equiv 1 \pmod{2}$,
and $q \equiv 2 \pmod{3}$ with $q>2$ are known,
see \cite{DeBeule2016,FMRXZ2021,FMX2015}.
In contrast to these results, $B(4, 2, 2)$ and
$B(n, k, q)$ hold for all $n \geq 2k$ when $q \in \{ 2, 3, 4, 5 \}$,
cf.~\cite{Drudge1998,FI2019,GM2014,GM2018,GP2005,Matkin2018}.
In \cite{FI2019}, the authors conjectured that there exists
an $n_0(k, q)$ such that $B(n, k, q)$ is true for all $n \geq n_0(k, q)$.
This is our main result.

\begin{Theorem}\label{thm:main}
  Let $q$ be a prime power and let $k$ be some integer, where $k \geq 2$.
  Then there exists a number $c_0(k, q)$ such that
  for all $\max(k, n-k) \geq c_0(k, q)$ all Boolean degree $1$
  functions on $k$-spaces in an $n$-dimensional vector space
  over a finite field with $q$ elements are trivial.
\end{Theorem}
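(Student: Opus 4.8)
\noindent
Throughout I represent $f$ by its point-weight function, a map $\wt\colon\gauss{V}{1}\to\Rf$ with $\wt(S)\coloneq\sum_{P\leq S}\wt(P)\in\{0,1\}$ for every $k$-space $S$ (sums over $1$-spaces $P$). The whole argument rests on one structural observation and one application of Ramsey's theorem for the subspace lattice. The observation: inside any $(k+1)$-subspace $U$ the point--hyperplane incidence matrix of $\PG(U)\cong\PG(k,q)$ is invertible over $\Rf$, so $\wt|_U$ is recovered from the Boolean values $\wt(S)$ ($S$ a hyperplane of $U$) by a fixed rational matrix; hence $\wt$ takes values in a finite set depending only on $k,q$, and a function on $\gauss{U}{k}$ that is constant has the corresponding constant as its \emph{only} point-weight. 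Using the duality $J_q(n,k)\cong J_q(n,n-k)$, which preserves ``Boolean degree $1$'' and the list of trivial examples, I may assume $n-k\geq c_0(k,q)$, so $n$ is large.

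Colour each $k$-space $S$ by $f(S)\in\{0,1\}$. If $n$ is large in terms of a constant $m=m(k,q)$ fixed below, the Ramsey theorem for the subspace lattice (Graham--Leeb--Rothschild) yields an $m$-subspace $W\leq V$ on which $f$ is constant on $k$-spaces; by the observation $\wt$ is then constant on the points of $W$, hence $\equiv0$ or $\equiv(q-1)/(q^k-1)$, and after replacing $f$ by $1-f$ (and $\wt$ accordingly) I may assume $\wt\equiv0$ on all points of $W$. Everything now reduces to: \emph{if the point-weight function of a Boolean degree $1$ function vanishes on a sufficiently large subspace $W$, then $f$ is trivial.}

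Fix a point $P\not\leq W$ and a generator $p$, and set $\phi_P(t)=\wt(\langle p+t\rangle)$ for $t\in W$. Because $\wt$ vanishes on $W$, for every coset $v+T$ of a $(k-1)$-subspace $T\leq W$ one gets $\sum_{t\in v+T}\phi_P(t)=\wt(\langle p+v\rangle+T)\in\{0,1\}$; so $\phi_P$ is a Boolean degree $1$ function on the scheme of affine $(k-1)$-flats of $W$. These admit a short classification --- shorter than in projective space, since the ``hyperplane'' example drops out ($q^{1-k}\mathbf{1}_{H_0}$ has a flat-sum $1/q$) --- so for $m$ large $\phi_P\in\{\,0,\ q^{1-k}\mathbf{1}_W,\ \mathbf{1}_{\{v_0\}},\ q^{1-k}\mathbf{1}_W-\mathbf{1}_{\{v_0\}}\,\}$. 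Call $P$, and its image $\bar P$ in $\PG(V/W)$, of \emph{type $A$} in the first two cases and of \emph{type $B$} in the last two; since replacing $p$ by $p+t'$ only translates $\phi_P$, the type depends on $\bar P$ alone. Now for a line $\bar\ell$ of $\PG(V/W)$ choose a $(k-2)$-subspace $R\leq W$ and a $k$-space $S\supseteq R$ with $S\cap W=R$ and $S+W$ the preimage of $\bar\ell$; then $\wt(S)=\sum_{R\leq T\leq S,\ \dim T=k-1}\wt(T)$, a sum of $q+1$ terms in which $\wt(T)=\sum_{r\in R}\phi_{P_T}(r)$ lies in $\{0,1\}$ if $P_T$ has type $A$ and in $\{1/q,\ 1/q-1\}$ if type $B$, the $q+1$ classes $\bar P_T$ being exactly the points of $\bar\ell$. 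Since $\wt(S)$ is an integer, reducing modulo $1$ forces $q$ to divide the number of type-$B$ classes on $\bar\ell$; hence every line of $\PG(V/W)$ carries $0$ or $q$ of them. So the type-$A$ classes meet every line in $1$ or in all of its points; consequently any two of them span a line consisting of type-$A$ classes, so they form a projective subspace, and being a blocking set that subspace is a hyperplane $\bar H$ or all of $\PG(V/W)$.

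Write $\wt=\wt_0+\wt_1$, where $\wt_0$ is $0$ on the type-$A$ classes and on $W$ and $q^{1-k}$ on the type-$B$ classes. Then $\wt_0$ is precisely the point-weight of the empty set (if there is no type-$B$ class) or of the complement of the class of a hyperplane $H\supseteq W$ (otherwise), while $\wt_1$ is supported on the at most one ``exceptional'' point of each class, with value $+1$ on type-$A$ and $-1$ on type-$B$ classes. The remaining task is to show $\wt_1$ is supported on at most one point. The basic tool is that a Boolean degree $1$ function whose point-weight is the indicator of a point set $N$ forces $|S\cap N|\leq1$ for every $k$-space $S$, hence $|N|\leq1$; applied to $f-f_0$, together with a line-by-line analysis in $\PG(V/W)$ of whether an exceptional point is compatible with the surrounding $A/B$ pattern, this leaves only the four shapes $\emptyset$, a point-pencil, the complement of a hyperplane-class, and the complement of example~(IV) --- and undoing the earlier complement, $f$ is one of the trivial examples (I)--(V). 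I expect this propagation to be the crux: establishing the affine classification of $\phi_P$ with its exact four-element list, and then the line-by-line elimination of stray exceptional points, which is precisely where the dimension of $\PG(V/W)$ --- and hence the size of $c_0(k,q)$ --- has to be taken large.
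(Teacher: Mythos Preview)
Your outline diverges substantially from the paper's proof, and the divergence is exactly where the real difficulty lies. The paper first reduces to $k=2$ (via the known implication $B(n,k,q)\Rightarrow B(n{+}1,k{+}1,q)$), then uses Ramsey recursively to force all point weights into an explicit eight-element set, and finally invokes two hard prior results---Drudge's local-to-global lemma for point-pencils in $J_q(4,2)$ and Metsch's gap $x\geq q+1$ for non-trivial line classes in $J_q(4,2)$---to finish. Those two ingredients are the engine of the proof; the Ramsey part only sets the stage.

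Your proposal tries to bypass Drudge and Metsch entirely, and the gap is the sentence ``These admit a short classification --- shorter than in projective space''. You are asserting that any real-valued $\phi$ on $\Ff_q^m$ whose sum over every affine $(k{-}1)$-flat lies in $\{0,1\}$ must, for $m$ large, be one of four explicit functions. But this affine statement is of the same strength as the theorem you are proving: it is the affine Cameron--Liebler problem, and no ``short'' argument is known. The hyperplane example dropping out does not make the classification easier---the non-trivial projective examples in $J_q(4,2)$ (Bruen--Drudge et al.) restrict to non-trivial affine functions, so for small $m$ your four-element list is simply false, and for large $m$ you have given no mechanism that rules out further examples. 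You acknowledge this is ``the crux'', but the proposal contains no idea for it, and in particular no base case on which an induction on $k$ or on $m$ could rest.

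The second vague spot---the ``line-by-line elimination of stray exceptional points''---is less serious: once the first gap is closed, $\wt_1$ takes values in $\{-1,0,1\}$ and that endgame is plausibly routine. But as written, the proposal reduces the theorem to a statement that is not visibly easier than the theorem itself, whereas the paper's route reduces it to two results about $J_q(4,2)$ that were already in the literature.
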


This answers the question on the existence of non-trivial Boolean degree $1$
functions in the setting that is interesting from the point of
view of complexity measures in theoretical computer science,
cf.~\cite{DFLLV2020}.

As a corollary of our main result, we show that
so called \textit{two-intersecting sets} with respect to $k$-spaces do not
exist when $n \geq n_0(k, q)$. Following Notation \ref{not:boolean_to_set},
the \textit{size of a Boolean function} $Y$ refers to $|Y|$.
For the regime that $n$ is small, we also improve a gap result for
the possible sizes of non-trivial Boolean degree $1$ functions.

Our main ingredients are earlier work by Drudge \cite{Drudge1998}
and Metsch \cite{Metsch2010} on Cameron-Liebler sets
as well as the celebrated general Ramsey theorems for projective and
affine spaces due to Graham, Leeb, and
Rothschild \cite{GLR1972}. We refer to
\cite{FY2023} for recent explicit bounds on these Ramsey numbers.

A {\it $\delta$-coloring} of a set $A$ is a map $\Delta: A \rightarrow B$
with $|B| = \delta$.
Two elements $a,b \in A$ have the same color
if $\Delta(a) = \Delta(b)$.
Corollary 2 in \cite{GLR1972} implies the following.

\begin{Theorem}[Graham, Leeb, Rothschild (1972)]\label{thm:GLR_proj}
  Let $q$ be a prime power.
  For every pair of positive integers $\delta$ and $k$,
  there exists a positive integer $\tilde{n}_0(k, \delta, q)$ such that for
  every vector space $V$ of dimension $n \geq \tilde{n}_0(k, \delta, q)$
  over a finite field with $q$ elements
  and any $\delta$-coloring of $\gauss{V}{1}$,
  there exists a $K \in \gauss{V}{k}$ such that all elements of $\gauss{K}{1}$
  have the same color.
\end{Theorem}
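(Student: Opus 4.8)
The plan is to recognise Theorem~\ref{thm:GLR_proj} as the $j = 1$ instance of the Graham--Leeb--Rothschild Ramsey theorem for the category of finite-dimensional $\Ff_q$-vector spaces. In the form that is convenient here that theorem reads: for all positive integers $j \le m$ and $r$ there is an integer $N = N(q, j, m, r)$ such that, for every vector space $V$ of dimension $n \ge N$ over the field with $q$ elements and every $r$-coloring of $\gauss{V}{j}$, there is a $W \in \gauss{V}{m}$ for which all elements of $\gauss{W}{j}$ receive the same color. By \cite{GLR1972} this is a consequence of their general Corollary~2 applied to the category of finite $\Ff_q$-vector spaces, and \cite{FY2023} supplies explicit values for $N$.

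To deduce the statement I would first record the (entirely formal) dictionary: a $k$-space of $V$ is a $k$-dimensional linear subspace $K$, and $\gauss{K}{1}$ is precisely the point set of the projective subspace it determines, so a $\delta$-coloring of $\gauss{V}{1}$ is literally a $\delta$-coloring of the $1$-subspaces of $V$. Then I would apply the vector space Ramsey theorem above with $j = 1$, $m = k$, and $r = \delta$, set $\tilde{n}_0(k, \delta, q) \coloneq N(q, 1, k, \delta)$, and read off the conclusion: whenever $n \ge \tilde{n}_0(k, \delta, q)$ and $\gauss{V}{1}$ is $\delta$-colored, there is a $K \in \gauss{V}{k}$ all of whose $1$-spaces share a color. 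That is the assertion to be proved.

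The only real work — and hence the main obstacle — is reconciling the categorical / parameter-set language of \cite{GLR1972} with the subspace-lattice language used here: one has to confirm that the objects being colored in their Corollary~2 are the $1$-dimensional subobjects (equivalently, projective points; or, in a morphism-based formulation, nonzero vectors, in which case one first colors a vector by the color of its span) and that the object produced is a full $m$-dimensional subspace that is monochromatic on \emph{all} of its $1$-subspaces rather than on some sub-family. If one has at hand only the affine version of the Graham--Leeb--Rothschild theorem, an extra but routine reduction is needed to pass to the linear (hence projective) formulation; this changes none of the structure of the argument. A self-contained alternative would be a direct Hales--Jewett-type double induction on $k$ and $\delta$, but that amounts to re-proving the vector space Ramsey theorem, so invoking \cite{GLR1972} is the efficient route and the one I would take.
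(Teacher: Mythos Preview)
Your proposal is correct and matches the paper's treatment: the paper does not give an independent proof but simply records that the statement is implied by Corollary~2 of \cite{GLR1972}, which is exactly the specialization $j=1$, $m=k$, $r=\delta$ you describe. Your additional remarks on translating between the categorical and subspace-lattice formulations are accurate but go beyond what the paper spells out.
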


As we can identify the $(k-1)$-spaces of an affine space
of dimension $n-1$ over some field $\Ff$
with the $k$-spaces of a vector space of dimension $n$ over $\Ff$
which are not contained in some fixed hyperplane $H$,
Corollary 3 in \cite{GLR1972} implies the following.

\begin{Theorem}[Graham, Leeb, Rothschild (1972)]\label{thm:GLR_aff}
  Let $q$ be a prime power.
  For every pair of positive integers $\delta$ and $k$,
  there exists a positive integer $\bar{n}_0(k, \delta, q)$ such that for
  every vector space $V$ of dimension $n \geq \bar{n}_0(k, \delta, q)$
  over a finite field with $q$ elements,
  every hyperplane $H$ of $V$,
  and any $\delta$-coloring of $\gauss{V}{1} \setminus \gauss{H}{1}$,
  there exists a $K \in \gauss{V}{k} \setminus \gauss{H}{k}$
  such that all elements of $\gauss{K}{1} \setminus \gauss{H}{1}$
  have the same color.
\end{Theorem}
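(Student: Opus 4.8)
The plan is to deduce this directly from Corollary~3 of \cite{GLR1972}, the affine version of the Ramsey theorem for flats of projective and affine spaces; essentially no new idea is needed beyond setting up carefully the dictionary between an affine space and the complement of a hyperplane, exactly as indicated in the paragraph preceding the statement. Fix the hyperplane $H$ of $V$ and realize the $(n-1)$-dimensional affine space $\AG(n-1,q)$ inside the projective space $\PG(V)$ as the complement of the hyperplane ``at infinity'' $H$: a point of $\AG(n-1,q)$ is a $1$-space $P$ of $V$ with $P \nsubseteq H$, and, more generally, a $j$-flat of $\AG(n-1,q)$ is a $(j+1)$-space $W$ of $V$ with $W \nsubseteq H$, whose underlying point set is exactly $\gauss{W}{1}\setminus\gauss{H}{1}$ (here $W\cap H$ is a hyperplane of $W$, so $\dim(W\cap H)=j$ and this set has $q^j$ points). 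This correspondence is an incidence-preserving bijection, so the points on the flat $W$ are precisely the $1$-spaces $P \subseteq W$ with $P \nsubseteq H$.

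First I would note that under this dictionary a $\delta$-coloring of $\gauss{V}{1}\setminus\gauss{H}{1}$ is literally the same datum as a $\delta$-coloring of the point set of $\AG(n-1,q)$. Next I would apply Corollary~3 of \cite{GLR1972} with the given color bound $\delta$ and with flat dimension $d:=k-1$: there is an integer $m_0=m_0(k-1,\delta,q)$ such that every $\delta$-coloring of the points of $\AG(m,q)$ with $m\geq m_0$ admits a monochromatic $(k-1)$-flat. Put $\bar n_0(k,\delta,q):=m_0(k-1,\delta,q)+1$. If $n\geq\bar n_0(k,\delta,q)$, then $n-1\geq m_0(k-1,\delta,q)$, so the induced coloring of the points of $\AG(n-1,q)$ has a monochromatic $(k-1)$-flat; translating this flat back through the dictionary, it is a $k$-space $K$ of $V$ with $K\nsubseteq H$, that is, $K\in\gauss{V}{k}\setminus\gauss{H}{k}$, all of whose underlying points $\gauss{K}{1}\setminus\gauss{H}{1}$ have the same color. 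This is exactly the assertion.

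I do not expect a genuine obstacle here; the two things that deserve a moment's care are (i) that the underlying point set of the affine flat attached to $W$ is really $\gauss{W}{1}\setminus\gauss{H}{1}$, which reduces to the observation that $W\cap H$ is a hyperplane of $W$ whenever $W\nsubseteq H$, and (ii) that Corollary~3 of \cite{GLR1972} is being used at the level of points, i.e.\ with its flat-dimension parameter for the colored objects equal to $0$ (its general form colors $i$-flats and produces monochromatic $d$-flats; we only need $i=0$), which is what lets us read off $\bar n_0$ from the GLR bound with the offset $+1$ accounting for $\dim\AG(n-1,q)=n-1$. Note also that one should not try to reduce to the projective statement (Theorem~\ref{thm:GLR_proj}) by extending the coloring arbitrarily to $\gauss{H}{1}$, since that gives no control over the position of the resulting monochromatic $k$-space relative to $H$; the affine Ramsey theorem is genuinely the right input. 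Finally, the case $k=1$ is degenerate but consistent: a $0$-flat is a single affine point, trivially monochromatic, and indeed $\gauss{K}{1}\setminus\gauss{H}{1}=\{K\}$ for any $1$-space $K\nsubseteq H$.
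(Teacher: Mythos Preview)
Your proposal is correct and matches the paper's approach exactly: the paper does not give a separate proof but simply notes, in the sentence preceding the statement, that identifying the $(k-1)$-flats of $\AG(n-1,q)$ with the $k$-spaces of $V$ not contained in a fixed hyperplane $H$ lets one read off the result from Corollary~3 of \cite{GLR1972}. Your write-up is a careful unpacking of precisely this dictionary, including the dimension shift $\bar n_0(k,\delta,q)=m_0(k-1,\delta,q)+1$, and your side remarks (that one cannot shortcut via Theorem~\ref{thm:GLR_proj}, and that the $k=1$ case is trivial) are accurate.
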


Note that $\bar{n}_0(k+1, \delta, q) > \bar{n}_0(k, \delta, q)$.\footnote{%
Let a $\delta$-coloring $\Delta$ be a witness for $\bar{n}_0(k, \delta, q)$,
that is, the coloring $\Delta: \gauss{V}{1} \setminus \gauss{H}{1} \rightarrow \{ 1, \ldots, \delta \}$, where $\dim(V) = \bar{n}_0(k, \delta, q)-1$, does not contain
a monochromatic $k$-space. Embed $V$ as a hyperplane
in some vector space $V'$ with $\dim(V') = \bar{n}_0(k, \delta, q)$.
Let $P$ be a point of $V'$ not in $V$.
Define a new coloring $\Delta'$ by
\[
  \Delta'(Q) = \begin{cases}
                \Delta(\< P, Q \> \cap V) & \text{ if } P \neq Q,\\
                1 & \text{ if } P = Q.
              \end{cases}
\]
As $\Delta$ does not contain monochromatic $k$-spaces,
$\Delta'$ does not contain a monochromatic $(k+1)$-space.}

The connection between these Ramsey-type results
for vector spaces and affine spaces derives from the fact that
the weights of a Boolean degree $1$ function on the vector space $V$
give us a coloring of the points of $V$.

Let us also state Voigt's
more general version \cite[Satz 3.3]{Voigt1978}, cf.~\cite[Theorem 1.9]{PV1981},
which includes the classical Ramsey theorem
as well as Ramsey theorems for projective
and affine spaces.
For any prime power $q$, let $R\cL(q)$ denote the class
of (ranked) lattices generated by a subset of $1$-spaces
for some finite vector space $V$ over the field with $q$
elements. For a coloring of all elements of the (sub)lattice $L \in R\cL(q)$,
we call $S$ in $L$ {\it monochromatic}
if all pairs of subspaces of $T_1, T_2 \subseteq S$
in $L$ with $\dim(T_1) = \dim(T_2)$ have the same color.

\begin{Theorem}[Voigt (1978)]\label{thm:voigt_ramsey}
  Let $q$ be a prime power.
  For every pair of positive integers
  $\delta, k$ there exists a positive integer $n_0(k, \delta, q)$
  such that for every $L \in R\cL(q)$ with $\rank(L) \geq n_0(k, \delta, q)$
  and every $\delta$-coloring of $L$ there exists
  a monochromatic element $K \in L$ with $\dim(K) = k$.
\end{Theorem}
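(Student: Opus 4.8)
The plan is to bootstrap the statement from the case where $L$ is a full subspace lattice over $\Ff_q$, which is the Grassmann--Ramsey theorem of Graham, Leeb and Rothschild \cite{GLR1972} (the same source that yields Theorems~\ref{thm:GLR_proj} and~\ref{thm:GLR_aff}). First I would replace the ``all ranks at once'' coloring by single-rank ones: it suffices that for all $q,\delta$ and all $1\le i\le k$ there is a bound $N(q,k,i,\delta)$ such that for every $L\in R\cL(q)$ with $\rank(L)\ge N(q,k,i,\delta)$ and every $\delta$-coloring of the rank-$i$ elements of $L$ there is a rank-$k$ element $K\in L$ all of whose rank-$i$ subelements in $L$ receive one color. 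Given this, one chains the constructions: choose large target ranks $k=m_{k-1}<m_{k-2}<\dots<m_0$ and assume $\rank(L)\ge m_0$; the level-$1$ result produces a rank-$m_1$ element $K_1\in L$ with monochromatic rank-$1$ part, and since the interval $[0,K_1]$ consists exactly of the joins of atoms of $L$ below $K_1$ it is again a member of $R\cL(q)$, so the level-$2$ result applied inside $[0,K_1]$ gives a rank-$m_2$ element $K_2\subseteq K_1$ with monochromatic rank-$2$ part, the rank-$1$ part being inherited. Iterating down to level $k-1$ yields a rank-$k$ element monochromatic at every rank, and this also fixes $n_0(k,\delta,q)$.

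For the single-rank statement, the full-subspace-lattice case is exactly Grassmann--Ramsey and the Boolean-lattice case is the classical Ramsey theorem for $i$-subsets; the content lies in arbitrary $L\in R\cL(q)$. The route I would take is a partite construction in the style of Ne\v{s}et\v{r}il--R\"odl inside the category $R\cL(q)$ with flat-preserving embeddings as morphisms: prove a partite lemma (a single-rank Ramsey statement for ``layered'' lattice configurations, with \cite{GLR1972} as the engine), then iterate amalgamation of copies of the target configuration to absorb the $\delta$ colors and project onto a single copy. A more hands-on alternative is a dichotomy: if $L$ contains an element $U$ of large rank all of whose ambient $1$-spaces are atoms of $L$, then $[0,U]$ is a full subspace lattice and \cite{GLR1972} applied inside $U$ finishes; otherwise one has to locate inside $L$ a large sufficiently ``free'' subconfiguration on which the affine theorem (Theorem~\ref{thm:GLR_aff}) or classical Ramsey applies, using that every rank-$2$ element of $L$ carries at most $q+1$ atoms to control the extra atoms that would spoil freeness.

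I expect the main obstacle to be precisely this non-homogeneous single-rank statement. A full subspace lattice is homogeneous enough that one appeal to \cite{GLR1972} governs every configuration that arises, whereas a general $L\in R\cL(q)$ has no such symmetry: in the partite approach one must establish the partite lemma from scratch and, more delicately, verify that the iterated amalgams remain $\Ff_q$-representable and thus stay inside $R\cL(q)$ (amalgamation of matroids over a fixed finite field is not automatic); in the dichotomy approach the corresponding difficulty is making the split exhaustive, i.e.\ treating the configurations that are neither a large projective piece nor a large free piece. Once either of these points is settled, assembling the single-rank bounds through the chaining of the first paragraph delivers the required $n_0(k,\delta,q)$, and hence the theorem.
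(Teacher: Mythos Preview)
The paper does not prove Theorem~\ref{thm:voigt_ramsey}; it is quoted as an external result, attributed to Voigt's thesis \cite[Satz~3.3]{Voigt1978} (cf.~\cite[Theorem~1.9]{PV1981}), and used only as background. There is therefore no ``paper's own proof'' to compare your proposal against.

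As for the proposal itself: your reduction from a full coloring of $L$ to single-rank colorings via the chain $K_{k-1}\subseteq\dots\subseteq K_1$ is standard and sound, and your observation that lower intervals $[0,K]$ remain in $R\cL(q)$ is correct. The substance, as you yourself identify, is the single-rank statement for a general $L\in R\cL(q)$, and here the proposal is a plan rather than a proof. Both suggested routes have real obstacles that you name but do not overcome: in the partite approach you must show that the amalgams stay $\Ff_q$-representable, which is genuinely delicate (amalgamation over a fixed finite field fails in general), and in the dichotomy approach the ``neither a large projective piece nor a large free piece'' case is left entirely open. So what you have written is an outline of the landscape around Voigt's theorem, with the key lemma still missing; if you want a self-contained argument you will need to consult \cite{Voigt1978} or \cite{PV1981} for how the single-rank case is actually handled.
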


This result seems to have been overlooked.
For instance, Mazzocca and Tallini
seem to have been unaware of Voigt's result while
writing the final remark in \cite{MT1985}.
We will discuss this in more detail in Section \ref{sec:mt}.

\section{Preliminaries}

We use projective notation and algebraic dimensions,
so we call $1$-spaces {\it points} and $2$-spaces {\it lines}.
Here $q$ is always a prime power.
Put $[a] := \frac{q^a-1}{q-1}$.
For $a \geq 1$, $[a]$ denotes the number of points in an $a$-space.
We will use repeatedly that a line has $q+1$ points.
The number of $b$-spaces in an $a$-space is given by the
$q$-binomial coefficient
\[
  \gauss{a}{b} := \prod_{i=1}^b \frac{[a-i+1]}{[i]}.
\]
Note that we always fix $q$, so there is no ambiguity.
Let $x_P^+$ be the indicator function of $P$.
If we write $f(x) = \sum_{P \in \gauss{V}{1}} c_P x_P^+$
as a homgeneous degree $1$ polynomial
as in \cite{FI2019}, then $\wt(P)$ is the coefficient $c_P$ of $x_P^+$.
Note that $f$ determines $\wt$ and vice-versa.

\begin{Proposition}\label{prop:wt_bnds_gen}
  Let $n \geq 2k \geq 4$.
  Let $Y$ be a Boolean degree $1$ function of $J_q(n, k)$ of size $x\gauss{n-1}{k-1}$.
  Then, for all $P \in \gauss{V}{1}$,
 \begin{align}
    0 \leq \wt(P) + \frac{[k-1]}{[n-1]} \left( x - \wt(P) \right) \leq 1.
    \label{eq:wt_bnd}
 \end{align}
\end{Proposition}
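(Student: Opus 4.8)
The plan is a double-counting argument; there is no real obstacle, only a couple of incidence counts to keep straight. Fix a point $P \in \gauss{V}{1}$ and let $y_P$ be the number of $k$-spaces $S$ with $P \subseteq S$ and $S \in Y$. Since $\wt(S) \in \{0,1\}$ for every $k$-space $S$ by the defining Boolean property, $y_P = \sum_{S \in \gauss{V}{k},\, P \subseteq S} \wt(S)$; moreover $0 \le y_P \le \gauss{n-1}{k-1}$, because $\gauss{n-1}{k-1}$ is the number of $k$-spaces through $P$. Normalising this inequality by $\gauss{n-1}{k-1}$ is what will produce \eqref{eq:wt_bnd}, once $y_P$ is evaluated a second way.

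For the second evaluation, expand $\wt(S) = \sum_{Q \in \gauss{S}{1}} \wt(Q)$ and swap the order of summation:
\[
  y_P \;=\; \sum_{Q \in \gauss{V}{1}} \wt(Q)\cdot \bigl|\{ S \in \gauss{V}{k} : \langle P, Q\rangle \subseteq S \}\bigr|.
\]
The inner count is $\gauss{n-1}{k-1}$ when $Q = P$ and $\gauss{n-2}{k-2}$ (the number of $k$-spaces through the line $\langle P,Q\rangle$) when $Q \neq P$, so
\[
  y_P \;=\; \wt(P)\gauss{n-1}{k-1} + \Bigl(\textstyle\sum_{Q \neq P}\wt(Q)\Bigr)\gauss{n-2}{k-2}.
\]
To evaluate $\sum_{Q \neq P}\wt(Q) = \bigl(\sum_{Q \in \gauss{V}{1}}\wt(Q)\bigr) - \wt(P)$, apply the same double count to $|Y| = \sum_{S \in \gauss{V}{k}} \wt(S)$: expanding and swapping gives $|Y| = \gauss{n-1}{k-1}\sum_{Q}\wt(Q)$, so the size hypothesis $|Y| = x\gauss{n-1}{k-1}$ yields $\sum_{Q}\wt(Q) = x$ and hence $\sum_{Q \neq P}\wt(Q) = x - \wt(P)$.

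Finally I would simplify $\gauss{n-2}{k-2}/\gauss{n-1}{k-1}$. Passing to the quotient $V/P$, these are the numbers of $(k-1)$-spaces through a fixed point, and in total, respectively, inside an $(n-1)$-space; double counting incident pairs of a point and a $(k-1)$-space shows the ratio equals $[k-1]/[n-1]$ (a $(k-1)$-space has $[k-1]$ points, the ambient space has $[n-1]$). Substituting this and dividing the identity for $y_P$ through by $\gauss{n-1}{k-1}$ gives
\[
  \frac{y_P}{\gauss{n-1}{k-1}} \;=\; \wt(P) + \frac{[k-1]}{[n-1]}\bigl(x - \wt(P)\bigr),
\]
so the bounds $0 \le y_P \le \gauss{n-1}{k-1}$ are precisely \eqref{eq:wt_bnd}. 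The only mild care needed is in the incidence counts $\gauss{n-1}{k-1}$ and $\gauss{n-2}{k-2}$ and in the identity $\sum_Q \wt(Q) = x$, all of which are routine.
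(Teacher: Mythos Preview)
Your proof is correct and follows essentially the same approach as the paper: both compute the average of $\wt(S)$ over $k$-spaces $S$ through $P$ (your $y_P/\gauss{n-1}{k-1}$) by summing point weights, use the identity $\sum_Q \wt(Q)=x$, and simplify via $\tfrac{[k]-1}{[n]-1}=\tfrac{[k-1]}{[n-1]}$. The paper phrases this as ``the average weight of a $k$-space through $P$'' while you make the double count explicit, but the content is identical.
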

\begin{proof}
 Write $w = \wt(P)$.
 As one point lies on $\gauss{n-1}{k-1}$ $k$-spaces, we find that
 \[
    \sum_{Q \in \gauss{V}{1}} \wt(Q) \gauss{n-1}{k-1} = \sum_{S \in \gauss{V}{S}} \wt(S) = |Y| = x \gauss{n-1}{k-1}.
 \]
 Hence, the average weight of a point distinct from $P$
 is $\frac{x-w}{[n]-1}$.
 Hence, the average weight of a $k$-space through $P$ is
 \[
  w + \frac{[k]-1}{[n]-1} (x-w) = w + \frac{[k-1]}{[n-1]} (x-w).
 \]
 This number is at least $0$ and at most $1$.
\end{proof}

Drudge observed in Lemma 6.1 and Lemma 6.3 of his PhD thesis
that if a trivial Boolean degree $1$ function
contains a point-pencil locally, then it is trivial
and contains a point-pencil globally, see \cite{Drudge1998}.

\begin{Proposition}[Drudge (1998)]\label{prop:drudge_easy_ind}
  Let $Y$ be a Boolean degree $1$ function on $J_q(n, 2)$.
  If the restriction of $Y$ to some $4$-space corresponds
  to Example (II), respectively, Example (IV), then $Y$
  corresponds to Example (II), respectively, Example (IV).
\end{Proposition}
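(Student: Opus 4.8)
The plan is to argue by induction on $n$, with the hypothesis of the proposition (restriction to some $4$-space is a point-pencil, or a point-pencil plus a hyperplane-class) serving as the base case $n=4$. The key structural fact to exploit is that a Boolean degree $1$ function restricts to a Boolean degree $1$ function on every subspace: if $Y$ is Boolean degree $1$ on $J_q(n,2)$ and $W$ is an $m$-space, then $Y \cap \gauss{W}{2}$ is Boolean degree $1$ on $J_q(m,2)$ (the weighting $\wt$ restricted to $\gauss{W}{1}$ still witnesses it). So the real content is an inductive step: if the restriction of $Y$ to every $4$-space through a fixed $3$-space $U$ (or to one suitably chosen $4$-space) is a point-pencil with the \emph{same} center $P \subseteq U$, then the restriction of $Y$ to every $5$-space containing $U$ is again a point-pencil with center $P$, and hence (sweeping over all $5$-spaces, or iterating) $Y$ itself is the point-pencil through $P$.

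First I would nail down the local-to-local step. Suppose the restriction of $Y$ to a $4$-space $W_0$ is exactly the point-pencil through a point $P \subseteq W_0$. Take any $4$-space $W_1$ meeting $W_0$ in a $3$-space $U$ with $P \subseteq U$; the two restrictions to $W_0 \cap W_1 = U$ agree, so $Y \cap \gauss{U}{2}$ is the point-pencil of $U$ through $P$. Now $Y \cap \gauss{W_1}{2}$ is Boolean degree $1$ on a $4$-space and contains the point-pencil of $U$ through $P$; I want to conclude it is the full point-pencil of $W_1$ through $P$. Here is where Drudge's local argument is used as a black box in disguise — but more directly, one uses the weight function: the weighting on $\gauss{W_1}{1}$ has $\wt(P)=1$ and $\wt(Q) = 0$ for all points $Q$ of $U$ other than $P$ (read off from the point-pencil structure on $U$), and Proposition~\ref{prop:wt_bnds_gen} (applied inside $W_1$, so $n=4$, $k=2$) pins down the admissible values of $\wt$ at the remaining points of $W_1$, forcing $Y \cap \gauss{W_1}{2}$ to be the point-pencil through $P$. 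Repeating: any $4$-space sharing a $3$-space through $P$ with an already-identified one is also a point-pencil through $P$, and since any two $4$-spaces of an $n$-space ($n \geq 4$) containing $P$ are connected by such a chain, every $4$-space through $P$ restricts to the point-pencil through $P$. A $2$-space $S$ lies in $Y$ iff it lies in some (equivalently every) $4$-space through it that contains $P$, which happens iff $P \subseteq S$; hence $Y$ is Example (II) globally.

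For Example (IV) I would run the same machine with the extra bookkeeping of a hyperplane. If the restriction of $Y$ to $W_0$ is the point-pencil through $P$ together with all lines in a plane $\pi \subseteq W_0$ with $P \nsubseteq \pi$, then on $\gauss{W_0}{1}$ the weighting is $\wt(P) = 1$, $\wt(Q) = 1$ for $Q \subseteq \pi$, $\wt = 0$ elsewhere — i.e. the point-weights are the indicator of $\{P\} \cup \gauss{\pi}{1}$, which is exactly the point set of a "point plus hyperplane" configuration inside $W_0$. The complement $\bar Y$ then restricts to a point-pencil–free degree $1$ function, and one reduces to the Example (II)/(III) analysis: the support of the point-weights, being $\{P\} \cup \gauss{\pi}{1}$ locally, propagates to $\{P\} \cup \gauss{H}{1}$ for a genuine hyperplane $H$ of $V$ with $P \nsubseteq H$ (the hyperplane $H$ is determined as the span of the union of the local $\pi$'s, and consistency on overlapping $3$-spaces forces these to be the traces of a single $H$). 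Then $Y$ is the point-pencil through $P$ union the set of all $2$-spaces in $H$, which is Example (IV).

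The main obstacle is the local rigidity at a $4$-space: showing that "Boolean degree $1$ on $J_q(4,2)$ plus contains the point-pencil of a fixed $3$-space through $P$" forces the global point-pencil of the $4$-space. This is essentially Drudge's Lemma~6.1/6.3 content, so either it is invoked directly, or it is re-derived from the weight bound \eqref{eq:wt_bnd} by checking that no point $Q$ with $Q \nsubseteq U$ can have nonzero weight without creating a $k$-space with $\wt \notin \{0,1\}$ — a short but genuine case analysis using that a line has $q+1$ points and that every plane/$3$-space weight is an integer in $\{0,1\}$. Once that $n=4$ rigidity is in hand, the induction and the connectivity argument for $4$-spaces through $P$ are routine.
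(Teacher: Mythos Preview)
The paper does not supply its own proof of this proposition; it simply attributes the statement to Lemmas~6.1 and~6.3 of Drudge's thesis. So there is nothing in the paper to compare your argument against, and your outline must be judged on its own merits.

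Your overall plan (read off the global point weights from the given $4$-space, then propagate) is the right idea, and for Example~(II) it can be made to work cleanly and \emph{without} the chain-of-$4$-spaces induction you sketch. Once you know $\wt(P)=1$ globally (which, as you say, follows from Lemma~\ref{lem:full_rk} inside any $3$-space of $W_0$ through $P$), the counting identity
\[
  \#\{L\in Y: P\subseteq L\} \;=\; ([n{-}1]-1)\,\wt(P) + x \;=\; q[n{-}2] + x
\]
forces $x\in\{0,1\}$ (the left side is an integer between $0$ and $[n{-}1]=q[n{-}2]+1$), and $x=0$ is impossible since $Y|_{W_0}\neq\emptyset$. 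Hence $x=1$, all lines through $P$ lie in $Y$, and $Y\setminus\text{pencil}(P)$ is a Boolean degree~$1$ function with parameter $0$, so empty. This bypasses your ``local rigidity in a $4$-space'' step entirely, and in particular avoids any appeal to Metsch's bound.

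For Example~(IV), however, your argument contains a genuine error: the weight function you write down is wrong. If $Y|_{W_0}$ is a point-pencil through $P$ together with all lines in a plane $\pi$ not containing $P$, the weights are
\[
  \wt(P)=1-\tfrac{1}{q^2+q},\qquad \wt(Q)=\tfrac{1}{q+1}\ \text{for }Q\subseteq\pi,\qquad \wt(Q)=-\tfrac{1}{q^2+q}\ \text{otherwise},
\]
not the $\{0,1\}$-valued indicator you claim. (With your weights, a line through $P$ would have weight $2$.) Consequently the ``support of the point-weights propagates to $\{P\}\cup\gauss{H}{1}$'' description is not literally correct, and the reduction you sketch via the complement does not go through as written. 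The repair is routine once you have the correct weights: every point $Q\neq P$ has weight in $\{\tfrac{1}{q+1},-\tfrac{1}{q^2+q}\}$, and evaluating $\wt(L)\in\{0,1\}$ on lines avoiding $P$ shows that $\{Q:\wt(Q)=\tfrac{1}{q+1}\}$ is a hyperplane $H$ with $P\nsubseteq H$; then $Y$ is exactly pencil$(P)\cup\gauss{H}{2}$. But as stated, your Example~(IV) paragraph would fail.
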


We need the following gap result due to Metsch \cite{Metsch2010}.

\begin{Proposition}[Metsch (2010)]\label{prop:metsch_bnds}
  Let $Y$ be a non-trivial Boolean degree $1$ function
  of size $x (q^2+q+1)$ on $J_q(4, 2)$.
  Then $x \geq q+1$.
\end{Proposition}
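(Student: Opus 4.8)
The plan is to prove that there is \emph{no} Cameron--Liebler line class $\mathcal{L}$ of $\PG(3,q)$ with $3\le x\le q$; this suffices, because a non-trivial class has $x\ge3$ (the classifications for $x=0,1,2$ — the empty set; a point-pencil or all lines in a plane; and, for $x=2$, their disjoint union, the $n=4$ instance behind Proposition~\ref{prop:drudge_easy_ind} — being classical), while every trivial example other than these has $x\ge q^2-1>q$. Throughout, write $s_P$ for the number of lines of $\mathcal{L}$ through a point $P$, $t_\pi$ for the number of lines of $\mathcal{L}$ contained in a plane $\pi$, and $m(P,\pi)$ for the number of lines of $\mathcal{L}$ in the pencil of an incident point--plane pair $(P,\pi)$.

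First I would record the standard local identities. A line outside $\mathcal{L}$ meets exactly $(q+1)x$ lines of $\mathcal{L}$, while a line in $\mathcal{L}$ meets exactly $(q+1)x+q^2-1$ others; counting incidences along a line $\ell$ this gives $\sum_{P\in\ell}s_P=(q+1)x+q(q+1)\,\wt(\ell)$ for every $\ell$. Since a function on the points of $\PG(3,q)$ is determined by its vector of line-sums — the point--line Gram matrix is $(q^2+q)I+J$, which is invertible — this upgrades to the pointwise formula $s_P=x+q(q+1)\,\wt(P)$, and dually $t_\pi=x+q(q+1)\,\wt^*(\pi)$ for the dual weighting~$\wt^*$. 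Combined with the flag identity $s_P+t_\pi-x=(q+1)\,m(P,\pi)$ and the bounds $0\le m(P,\pi)\le q+1$, this forces — using that any two points lie in a common plane, and dually — a constant $d$ with $s_P\equiv d$ and $t_\pi\equiv x-d\pmod{q+1}$ for all $P$ and $\pi$. Finally a two-moment count — $\sum_P s_P=(q+1)x(q^2+q+1)$, and $\sum_P s_P^2=(q+1)(x+q)\,x(q^2+q+1)$ from the line-meeting numbers above — shows that the $s_P$ have average $x(q^2+q+1)/(q^2+1)<q+1$ but $s_P$-weighted average $x+q$, so some point has $s_P\ge x+q$, and dually some plane has $t_\pi\ge x+q$.

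The decisive step, which I expect to be the main obstacle, is to promote this to the statement that for $1\le x\le q$ some $s_P$, or some $t_\pi$, attains the maximum $q^2+q+1$ — equivalently, that $\mathcal{L}$ contains a full point-pencil or all lines of a plane. The idea is to fix a line $m\in\mathcal{L}$ and count the $(q+1)x+q^2-1$ lines of $\mathcal{L}$ meeting $m$ by distributing them simultaneously over the $q+1$ points and the $q+1$ planes of $m$: the moment computation forces all but a bounded number of the involved $s_P$ and $t_\pi$ to lie close to their (small) average, each incident pair is rigidly constrained by $0\le m(P,\pi)\le q+1$ together with the two linear identities above, and a careful inventory of the few exceptional points and planes should show that, for $x\le q$, the only way to realise $(q+1)x+q^2-1$ neighbours of $m$ is for the exceptional configuration to collapse onto a single pencil or plane — which is therefore saturated. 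Making this book-keeping precise is the hard, technical core of the argument; this is where I would follow and cite Metsch~\cite{Metsch2010}.

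To conclude, suppose $\mathcal{L}$ has $3\le x\le q$. By the decisive step, $\mathcal{L}$ contains a full point-pencil through some point $P_0$ or all lines of some plane $H_0$; deleting that $(q^2+q+1)$-element set leaves a Cameron--Liebler line class of parameter $x-1$ — its characteristic vector is the difference of two vectors lying in the Cameron--Liebler eigenspace, hence lies in it — which moreover contains no line through $P_0$, respectively no line of $H_0$. Iterating (every class of parameter at least $3$ arising here still has parameter at most $q$, so the decisive step applies again), we descend to a Cameron--Liebler line class of parameter $2$, which by the classical case $x=2$ is the disjoint union of a point-pencil and all lines of a plane. But that configuration contains a line through every point and a line in every plane, contradicting that the descended class avoids all lines through $P_0$ (or all lines of $H_0$). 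Hence no Cameron--Liebler line class has $3\le x\le q$, so a non-trivial one has $x\ge q+1$ (and, applying the argument to the complement, also $x\le q^2-q$).
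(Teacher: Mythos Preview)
The paper gives no proof of this proposition; it is stated as Metsch's result with a bare citation to \cite{Metsch2010}, so there is no ``paper's own proof'' to compare against.

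Your proposal is not an independent proof either. The framing is correct --- the identities $s_P=x+q(q+1)\,\wt(P)$, the flag relation $s_P+t_\pi-x=(q+1)\,m(P,\pi)$, the two-moment computation, and the descent by stripping off full pencils or planes down to the classified case $x=2$ are all sound --- but the one step that carries all the weight, namely that a Cameron--Liebler line class with $1\le x\le q$ must contain a full point-pencil or all lines of a plane, you explicitly do not prove: you offer only a heuristic (``a careful inventory \ldots\ should show that \ldots\ the exceptional configuration [collapses] onto a single pencil or plane'') and then write ``this is where I would follow and cite Metsch''. That step \emph{is} Metsch's theorem; the surrounding material is standard Cameron--Liebler bookkeeping that predates it. So in the end both the paper and your proposal defer to \cite{Metsch2010} for the actual content; you have made the reduction to the hard kernel explicit, but you have not supplied a proof of that kernel.
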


\begin{Corollary}\label{cor:bnds_wt_n_eq_4}
  Let $Y$ be a Boolean degree $1$ function on $J_q(4, 2)$.
  \begin{enumerate}[(i)]
   \item If a point has weight larger than $\frac{q}{q+1}$,
  then $Y$ is trivial and contains a point-pencil.
  \item If a point has weight less than $-\frac{q-1}{q+1}$,
  then $Y$ is trivial and its complement contains
  a point-pencil.
  \end{enumerate}
\end{Corollary}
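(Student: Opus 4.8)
The plan for part~(i) is to convert the large weight at $P$ into a bound on $|Y|$ that is small enough to invoke Metsch's gap result, and then to read off which trivial example $Y$ must be. Write $x := |Y|/(q^2+q+1)$, so that $Y$ has size $x\gauss{3}{1}$. Applying Proposition~\ref{prop:wt_bnds_gen} with $n=4$ and $k=2$, where $[k-1]=[1]=1$ and $[n-1]=[3]=q^2+q+1$, the upper bound in~\eqref{eq:wt_bnd} becomes $\wt(P)+\frac{1}{q^2+q+1}\bigl(x-\wt(P)\bigr)\le 1$, i.e.\ $(q^2+q)\wt(P)+x\le q^2+q+1$. If $\wt(P)>\frac{q}{q+1}$, then $(q^2+q)\wt(P)>q^2$, so $x<q+1$. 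Since a non-trivial Boolean degree $1$ function on $J_q(4,2)$ has $x\ge q+1$ by Proposition~\ref{prop:metsch_bnds}, $Y$ must be one of the trivial examples~(I)--(V).

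It then remains to rule out every trivial example except the point-pencil~(II) and the union~(IV). A short size count shows the complements in~(V) have $x\in\{q^2+1,\,q^2,\,q^2-1\}$, and $q^2-1\ge q+1$ for every prime power $q\ge 2$, so these are excluded by $x<q+1$. The empty set~(I) has all point weights equal to $0$. For the hyperplane example~(III), which has $x=1$, the averaging argument from the proof of Proposition~\ref{prop:wt_bnds_gen}, applied to the lines through a fixed point (of which $q+1$ lie in the hyperplane $H$ if the point lies in $H$, and none otherwise), shows that every point has weight $\frac{1}{q+1}$ or $-\frac{1}{q(q+1)}$; in particular no point has weight exceeding $\frac{1}{q+1}<\frac{q}{q+1}$. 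Hence $Y$ is of type~(II) or~(IV), and both of these contain a point-pencil, which proves~(i).

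For part~(ii) I would pass to the complement. If $\wt$ is the weighting of $Y$, then the weighting of the complement $Y^c$ is $\frac{1}{q+1}-\wt$: the indicator function of $Y^c$ is $1$ minus that of $Y$, the all-ones function on $J_q(4,2)$ has the constant point-weighting $\frac{1}{q+1}$ because every line carries $q+1$ points, and the weighting is determined by the function. So if $\wt(P)<-\frac{q-1}{q+1}$, then the weight of $P$ in $Y^c$ is $\frac{1}{q+1}-\wt(P)>\frac{q}{q+1}$, and part~(i) applies to $Y^c$: it is trivial and contains a point-pencil. Therefore $Y$ is trivial and its complement contains a point-pencil.

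The only slightly delicate step is the case distinction in the second paragraph: one must verify that, among the trivial examples with $x<q+1$, exactly~(II) and~(IV) admit a point of weight larger than $\frac{q}{q+1}$. This is the single place where the explicit list of trivial examples and their weightings is used rather than the Ramsey- or gap-type inputs, but all the computations involved are elementary counts in $\PG(3,q)$.
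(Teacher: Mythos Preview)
Your proof is correct and follows the same approach as the paper: combine Proposition~\ref{prop:wt_bnds_gen} with Metsch's gap result (Proposition~\ref{prop:metsch_bnds}) to force $Y$ to be trivial, then identify which trivial example occurs. The paper's version is terser---it leaves the verification that the resulting trivial example must contain a point-pencil implicit, and for~(ii) it uses the lower bound of~\eqref{eq:wt_bnd} directly together with $x\le q^2-q$ (Metsch applied to the complement) rather than your complementation argument---but the substance is the same.
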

\begin{proof}
  By Proposition \ref{prop:metsch_bnds}, if the assertion in (i)
  is false, then $x \geq q+1$.
  Rearranging \eqref{eq:wt_bnd} for $\wt(P)$ shows
  that $\wt(P) \leq \frac{q}{q+1}$ if $Y$ is non-trivial.
  Similarly, by Proposition \ref{prop:metsch_bnds}, if the assertion
  in (ii) is false, then $x \leq q^2-q$.
  Rearranging \eqref{eq:wt_bnd} for $\wt(P)$ shows
  that $\wt(P) \geq -\frac{q-1}{q+1}$ if $Y$ is non-trivial.
\end{proof}

Lastly, we will use the following well-known facts.

\begin{Lemma}\label{lem:full_rk}
  The point-line incidence matrix of a $3$-dimensional vector space
  over a finite field has full rank.
\end{Lemma}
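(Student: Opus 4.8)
The plan is to use the standard eigenvalue trick. Everything is done over $\Rf$, which is the setting in which the lemma is applied. In the projective language of the paper a $3$-dimensional vector space is the projective plane $\PG(2,q)$: its points are the $1$-spaces, its lines are the $2$-spaces, and there are $[3] = q^2+q+1$ of each. Hence the point--line incidence matrix $M$ (rows indexed by points, columns by lines, with $M_{P,\ell} = 1$ iff $P \subseteq \ell$) is a \emph{square} matrix of size $q^2+q+1$, so it suffices to show $\det M \neq 0$, and for this I will show that $MM^\top$ is nonsingular.

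First I would record the two elementary incidence facts: every point lies on exactly $q+1$ lines, and every pair of distinct points lies on exactly one common line. Reading these entries off the product shows
\[
  MM^\top = q\, I + \mathbf{1},
\]
where $I$ is the identity matrix and $\mathbf{1}$ is the all-ones matrix, both of size $q^2+q+1$. Now diagonalise: $\mathbf{1}$ has eigenvalue $q^2+q+1$ with multiplicity $1$ (eigenvector the all-ones vector) and eigenvalue $0$ with multiplicity $q^2+q$. Therefore $MM^\top = qI + \mathbf{1}$ has eigenvalue $q + (q^2+q+1) = (q+1)^2$ once and eigenvalue $q$ with multiplicity $q^2+q$, so $\det(MM^\top) = (q+1)^2\, q^{\,q^2+q}$, which is nonzero because $q \geq 2$. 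Hence $MM^\top$ is invertible, so $M$ has rank $q^2+q+1$, i.e.\ full rank.

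There is essentially no obstacle here; this is a classical computation. The only points that deserve a word of care are that the matrix is taken over $\Rf$ (over $\Ff_p$ with $p \mid q$ the rank drops), that the point--line geometry of a $3$-space really is a projective plane so that $M$ is square and "full rank" means nonsingular, and that $q \neq 0$ so the smaller eigenvalue does not vanish.
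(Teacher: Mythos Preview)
Your proof is correct and follows essentially the same route as the paper: compute $MM^\top = qI + J$ from the basic incidence counts of $\PG(2,q)$ and observe that this matrix is nonsingular since $q \geq 2$. The paper stops at ``$qI + J$ has full rank'' without spelling out the eigenvalue computation, but your added detail is harmless.
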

\begin{proof}
  Let $C$ denote the $(q^2+q+1)\times(q^2+q+1)$ point-line incidence matrix
  and let $q$ denote the order of the finite field.
  Then $C^T C = q I + J$, where $I$ denote the identity matrix
  and $J$ denotes the all-ones matrix. As $q \geq 2$, $C^T C$
  has full rank. Hence, $C$ has full rank.
\end{proof}

\begin{Lemma}\label{lem:pt_and_coline}
  For $n \geq 3$, let $V$ be an $n$-dimensional vector space.
  Let $P$ be a point and $C$ be an $(n-2)$-space
  with $P \nsubseteq C$. Then there exists a
  line through $P$ which has no point in $C$.
\end{Lemma}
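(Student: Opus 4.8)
The plan is to work inside the hyperplane spanned by $P$ and $C$. Since $P \nsubseteq C$, the subspace $H := \< P, C \>$ has dimension $(n-2)+1 = n-1$, so it is a proper hyperplane of $V$. The first step is the observation that every line through $P$ meeting $C$ must already be contained in $H$: indeed, if a line $\ell$ with $P \subseteq \ell$ contains a point $R$ of $C$, then $R \neq P$ (because $P \nsubseteq C$), so $\ell$ is the span of the two distinct points $P$ and $R$, whence $\ell = \< P, R \> \subseteq \< P, C \> = H$.

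Given this, it remains to exhibit a line through $P$ that is \emph{not} contained in $H$. As $H$ is a proper subspace of $V$, we may pick a point $Q \nsubseteq H$; then $Q \neq P$ and $Q \nsubseteq P$, so $\ell := \< P, Q \>$ is a genuine $2$-space through $P$. If $\ell$ contained a point of $C$, the first step would force $\ell \subseteq H$, contradicting $Q \subseteq \ell$ together with $Q \nsubseteq H$. Hence $\ell$ has no point in $C$, which is exactly what is claimed.

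I do not expect a real obstacle here: the whole argument is a short dimension count, and the hypothesis $n \geq 3$ only serves to make $C$ a genuine (nonempty) projective subspace and $H$ at least a line. The single point requiring a little care is checking that the objects constructed have the claimed dimensions — that $\dim H = n-1$ (which uses $P \nsubseteq C$) and that $\ell$ is truly a $2$-space (which uses $Q \nsubseteq H$ and $P \subseteq H$) — but both are immediate.
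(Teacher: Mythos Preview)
Your proof is correct and is essentially the paper's argument: the paper passes to the quotient $V/P$, notes that $\<P,C\>/P$ is a proper subspace, and pulls back a point outside it to a line through $P$ missing $C$; you carry out the identical dimension count directly in $V$ without naming the quotient. The two arguments are the same up to this cosmetic difference.
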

\begin{proof}
  The quotient space $\< P, C \>/P$ is an $(n-2)$-dimensional vector
  space, and $V/P$ is an $(n-1)$-dimensional vector space.
  Hence, there exists a point in $V/P$ which is not in $\< P, C \>/P$.
  Hence, there exists a line in $V$ through $P$ which
  does not contain a point of $C$.
\end{proof}

\section{Proof of Theorem \ref{thm:main}}

Theorem 4.15 of \cite{FI2019} shows that
$B(n, k, q)$ implies $B(n+1, k+1, q)$ and
$B(n+1, k, q)$. Hence, to prove Theorem \ref{thm:main},
it is enough to show that $B(n, 2, q)$ holds for
all $n \geq n_0(q)$ for some constant $n_0(q)$. Then the assertion follows
for $\min(k, n-k) \geq 2$ and $\max(k, n-k) \geq n_0(q)+k$.
Hence, throughout this section, we will make the following assumption.
\begin{Assumption}
  Assume that $Y$ is a Boolean degree $1$ function on $J_q(n, 2)$.
\end{Assumption}

\noindent
It is also convenient to group points by their weight.
\begin{Notation}
 Denote the set of points with weight $w$ by $\cP(w)$.
\end{Notation}

\noindent
Now we are ready to prove Theorem \ref{thm:main}: First we will show in Lemma \ref{lem:classical_weights_are_good} that
if we restrict $w$ to certain weights, then Theorem \ref{thm:main} holds.
Then we will show in Proposition \ref{prop:rec_ram} that for $n$ sufficiently large, Lemma \ref{lem:classical_weights_are_good} applies.

\begin{Lemma}\label{lem:classical_weights_are_good}
  Let $n \geq 4$.
  If $Y$ has point weights only in $[-1, -\frac{q-1}{q+1}) \allowbreak
  \cup \{-\frac{1}{q^2+q}, 0, \allowbreak \frac{1}{q+1}, \frac{1}{q}\} \allowbreak \cup (\frac{q}{q+1}, 1]$,
  then $Y$ is trivial.
\end{Lemma}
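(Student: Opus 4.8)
The plan is to split according to which part of the weight set the point weights occupy: the interval $(\frac{q}{q+1},1]$, the interval $[-1,-\frac{q-1}{q+1})$, or the four isolated values $\{-\frac{1}{q^2+q},0,\frac{1}{q+1},\frac{1}{q}\}$. The reduction used throughout is that restricting $Y$ to a subspace $W$ gives a Boolean degree $1$ function on $J_q(\dim W,2)$ whose point weights are the original ones restricted to the points of $W$; in particular the $n=4$ results apply verbatim inside any $4$-space.

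For the extreme weights: if some point $P$ has $\wt(P)\in(\frac{q}{q+1},1]$, I would pick any $4$-space $W$ through $P$ and restrict. Since $\wt(P)$ is unchanged, Corollary \ref{cor:bnds_wt_n_eq_4}(i) shows that $Y|_W$ is trivial and contains a point-pencil, i.e.\ it corresponds to Example (II) or Example (IV), and Proposition \ref{prop:drudge_easy_ind} then forces $Y$ itself to correspond to Example (II) or (IV). If instead some point has $\wt(P)\in[-1,-\frac{q-1}{q+1})$, I would run the same argument on the complement $Y^{c}$, whose weighting is $P\mapsto\frac{1}{q+1}-\wt(P)$ and which is again a Boolean degree $1$ function: Corollary \ref{cor:bnds_wt_n_eq_4}(ii) makes $(Y^{c})|_W=(Y|_W)^{c}$ contain a point-pencil, Proposition \ref{prop:drudge_easy_ind} applied to $Y^{c}$ concludes, and closure of the trivial examples under complementation gives that $Y$ is trivial.

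It remains to treat the case that every point weight lies in $\{-\frac{1}{q^2+q},0,\frac{1}{q+1},\frac{1}{q}\}$. First I would classify the weight multisets on a single line: measuring weights in units of $\frac{1}{q^2+q}$, so that the four values become $-1,0,q,q+1$, the requirement that the $q+1$ values on a line sum to $0$ or to $q(q+1)$ forces the line to be of exactly one of four types: all points of weight $0$; one point of weight $\frac{1}{q+1}$ and $q$ of weight $-\frac{1}{q^2+q}$; all points of weight $\frac{1}{q+1}$; or $q$ points of weight $\frac{1}{q}$ and one of weight $0$. Hence two points of distinct weight are collinear only when their weights form $\{\frac{1}{q+1},-\frac{1}{q^2+q}\}$ or $\{0,\frac{1}{q}\}$, and since any two points span a line, either all point weights lie in $\{\frac{1}{q+1},-\frac{1}{q^2+q}\}$ or all lie in $\{0,\frac{1}{q}\}$. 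The second is precisely the complementary situation to the first (the complementary weighting sends $0\mapsto\frac{1}{q+1}$ and $\frac{1}{q}\mapsto-\frac{1}{q^2+q}$), so it suffices to handle the first. There I would set $H^{\ast}=\cP(\frac{1}{q+1})$; the type list shows every line meets $H^{\ast}$ in $1$ or $q+1$ points, so $H^{\ast}$ contains the line joining any two of its points and is therefore a subspace, and it is nonempty since every line meets it. If $H^{\ast}$ were a proper subspace of dimension at most $n-2$, then taking an $(n-2)$-space $C\supseteq H^{\ast}$ and a point outside $C$, Lemma \ref{lem:pt_and_coline} would produce a line missing $C$, hence missing $H^{\ast}$, a contradiction. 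So either $H^{\ast}$ is a hyperplane $H$, in which case $\wt(S)=1$ exactly for the lines $S\subseteq H$ and $\wt(S)=0$ otherwise, so $Y$ is Example (III); or $H^{\ast}=\gauss{V}{1}$, in which case $Y$ is the set of all lines. Either way $Y$ is trivial.

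The two extreme-weight cases are essentially bookkeeping on top of the quoted $n=4$ results; the substance is the isolated-weight case. The main obstacle there is twofold: making the enumeration of line types genuinely exhaustive, and then reading off from it that $H^{\ast}$ must be a subspace, in fact a hyperplane, so that $Y$ is forced to be the hyperplane example or its complement. Keeping track of the degenerate possibilities ($H^{\ast}=\gauss{V}{1}$, or only one weight value occurring, which in some cases is outright impossible) is the place that needs care.
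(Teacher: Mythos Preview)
Your proposal is correct. The treatment of the two extreme intervals is identical to the paper's: restrict to a $4$-space, invoke Corollary~\ref{cor:bnds_wt_n_eq_4}, and globalize via Proposition~\ref{prop:drudge_easy_ind} (applied to $Y$ or to its complement).

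In the isolated-weight case your route differs from the paper's. You carry out a complete classification of line types by rescaling the four weights to $-1,0,q,q+1$ and solving $-a+qc+(q+1)d\in\{0,q(q+1)\}$ under $a+b+c+d=q+1$; this yields exactly the four types you list and immediately gives the dichotomy that all point weights lie either in $\{0,\tfrac1q\}$ or in $\{\tfrac1{q+1},-\tfrac1{q^2+q}\}$. You then reduce to one case by complementation and show $\cP(\tfrac1{q+1})$ is a subspace met by every line, hence (via Lemma~\ref{lem:pt_and_coline}) a hyperplane or everything. The paper instead argues locally: it analyzes lines through a point of weight $0$, shows such a line cannot see $\cP(-\tfrac1{q^2+q})$, deduces that $\cP(0)$ is a subspace $S$, and then rules out $1\le\dim S\le n-2$ by observing that the complement of $S$ would land in $\cP(\tfrac1q)$, which cannot contain a line; the $\cP(\tfrac1{q+1})$ case is obtained by complementation, and the residual possibility that all weights lie in $\{-\tfrac1{q^2+q},\tfrac1q\}$ is excluded by a separate parity check. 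Your global line-type enumeration replaces these separate steps by a single uniform computation and makes the ``two worlds'' dichotomy explicit from the outset; the paper's argument is more incremental but avoids the full case analysis. Both are elementary and of comparable length.
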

\begin{proof}
  If a point has a weight in $[-1, -\frac{q-1}{q+1})$
  or $( \frac{q}{q+1}, 1 ]$, then the assertion follows
  from Proposition \ref{prop:drudge_easy_ind} and Corollary \ref{cor:bnds_wt_n_eq_4}.
  Hence, we can assume that all weights are in $\{ -\frac{1}{q^2+q}, 0, \frac{1}{q+1}, \frac{1}{q}\}$.

  A line $L$ through a point with weight $0$ necessarily has all
  its points in $\cP(0) \cup \cP(\frac{1}{q})$ and $L$ intersects
  $\cP(0)$ in either $1$ or $q+1$ points:
  Say that $L$ has $\alpha$ points in $\cP(-\frac{1}{q^2+q})$.
  Then $\wt(L) \leq \alpha \cdot \left(-\frac{1}{q^2+q}\right) + (q-\alpha) \cdot \frac{1}{q}$.
  If $\wt(L) = 1$, then we obtain $\alpha \leq 0$.
  Hence, $\wt(L) = 1$ if and only if $q$ points of $L$
  are in $\cP(\frac{1}{q})$. Hence, $\wt(L) = 0$ if $\alpha > 0$.
  At least one point on $L$ must have positive weight. Hence, $\alpha \leq q-1$.
  Thus, $0 = \wt(L) \geq \alpha \cdot \left(-\frac{1}{q^2+q}\right) + \frac{1}{q+1} \geq \frac{1}{q^2+q} > 0$,
  also impossible.
  Hence, if $\wt(L) = 0$, then $L$ is contained in $\cP(0)$.

  The previous paragraph shows that
  the set $\cP(0)$ forms a subspace $S$.
  Also note that $\cP(\frac{1}{q})$ cannot contain a line
  as $(q+1) \cdot \frac{1}{q} \not\in \{ 0, 1 \}$.
  If $\dim(S) = n$, then we have the complement of Example (I).
  If $1 \leq \dim(S) \leq n-1$, then pick
  any $(\dim(S)+1)$-space $R$ through $S$.
  By the argument above, $R \setminus S \subseteq \cP(\frac{1}{q})$.
  As this holds for any such $R$, all points not in $S$
  have weight $\frac{1}{q}$. But $\cP(\frac{1}{q})$ cannot
  contain a line, so $S$ must be a hyperplane.
  Hence, if $\cP(0)$ is not empty or the whole space,
  then $\cP(0)$ is a hyperplane and $\cP(\frac{1}{q})$ is its complement.
  Hence, $Y$ is the complement of Example (III).

  Analogously, if $\cP(\frac{1}{q+1})$ is not empty or the whole space, then
  $\cP(\frac{1}{q+1})$ is a hyperplane and $\cP(\frac{-1}{q^2+q})$
  is its complement.
  To see this, consider the complement $\overline{Y}$ of $Y$,
  that is, $\overline{Y} = 1 - Y$:
  Let $\overline{\cP}(w)$ denote the points with weight $w$ with respect to $\overline{Y}$.
  The constant function $1$ has weight $\frac{1}{q+1}$ for all points.
  Hence, $P \in \cP(w)$ if and only if $P \in \overline{\cP}(\frac{1}{q+1} - w)$.
  In particular, $\cP(\frac{1}{q+1}) = \overline{\cP}(0)$
  and $\cP(\frac{1}{q}) = \overline{\cP}(-\frac{1}{q^2+q})$.
  Thus, we are in the previous situation with $\overline{Y}$ in the role of $Y$.
  In this case, $Y$ is either Example (I) or Example (III).

  It remains to exclude the case that all weights are in
  $\cP(\frac{-1}{q^2+q}) \cup \cP(\frac{1}{q})$. But then the weight of
  a line is $\frac{-\alpha}{q^2+q} + \frac{q+1-\alpha}{q} = 1+\frac{q+1-(q+2)\alpha}{q^2+q} \in \{ 0, 1\}$
  for some $\alpha \in \{ 0, \ldots, q+1 \}$ which is impossible.
\end{proof}

Next we show that for $n$ large enough, Lemma \ref{lem:classical_weights_are_good}
is always applicable.

\begin{Lemma}\label{lem:aff_weights}
  Let $Y$ be a Boolean degree $1$ function on $J_q(n, 2)$, where $n \geq 2$,
  such that there exists a hyperplane $H$ of the $n$-space $V$ such that all
  points in $H$ have weight $0$ and all points not in $H$
  have constant weight $w$. Then $w \in \{ 0, \frac1q\}$.
\end{Lemma}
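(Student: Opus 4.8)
The plan is to turn the hypothesis into a statement about a single line and then count. Pick any line $L$ that meets $H$ in exactly one point; such a line exists since $H$ is a hyperplane and we may take any point $P\notin H$ together with a point $Q\in H$ and set $L=\langle P,Q\rangle$. Then $L$ has one point of weight $0$ (namely $Q=L\cap H$) and $q$ points of weight $w$ (the points of $L\setminus H$), so $\wt(L)=qw$. Since $Y$ is Boolean of degree $1$, $\wt(L)\in\{0,1\}$, hence $qw\in\{0,1\}$, i.e. $w\in\{0,\tfrac1q\}$.

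The only thing to check carefully is that a line meeting $H$ in exactly one point actually exists, which needs $n\ge 2$ and uses that $H$ is a proper subspace: any point $P$ outside $H$ spans, together with any point of $H$, such a line, and $H\ne V$ guarantees $P$ exists while $\dim V\ge 2$ guarantees $H$ is nonempty. (If one prefers, Lemma \ref{lem:pt_and_coline} with $C$ replaced by a suitable codimension-consideration gives the same thing, but the direct argument is cleaner here.) I would also remark that the case $w=0$ is exactly Example (I), and $w=\tfrac1q$ corresponds to the complement of Example (III), so both values do occur; but that is not needed for the statement as phrased.

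There is essentially no obstacle: the content is a one-line weight count on a cleverly chosen line. The only place to be slightly careful is not to accidentally pick a line lying inside $H$ or a line missing $H$ entirely — the former would give $\wt(L)=0$ trivially (no information) and the latter is impossible for a line and a hyperplane anyway, since any line meets any hyperplane in at least one point. So the argument is: choose a line transversal to $H$, evaluate $\wt(L)=qw$, and conclude $w\in\{0,1/q\}$ from $qw\in\{0,1\}$.
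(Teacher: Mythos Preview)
Your argument is correct and essentially identical to the paper's own proof: pick a point in $H$ and a point outside $H$, observe that the line they span meets $H$ in exactly one point, and conclude $\wt(L)=qw\in\{0,1\}$. The only cosmetic difference is that the paper names the point inside $H$ first, but the content is the same.
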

\begin{proof}
  As $n \geq 2$, there exists
  a point $P$ in $H$
  and a point $Q$ not in $H$.
  Precisely one point of $\<P, Q\>$ is in $H$, namely $P$,
  and has weight $0$. The $q$ remaining points have weight $w$,
  so $qw$ is either $0$ or $1$.
\end{proof}

\begin{figure}
\centering
    \begin{tikzpicture}[
        scale=0.5,
        line width=1pt,
        fill=black,
        every node/.style={inner sep=0pt, outer sep=0pt},
        every circle/.style={radius=2.5pt}
    ]

    \node[name=Tip1,label=200:{ $T_{3}$}] at (1,0) {};
    \node[name=Ti] at (0,6) {};
    \node[name=P,outer sep=1pt,label=40:{ $P$}] at (12,3) {};
    \node[name=S1,label=40:{ $S_{2}^1$}] at (8,10) {};
    \node[name=S2,label=40:{ $S_{2}^2$}] at (9.2,8) {};
    \node[name=H,label=40:{ $H_2$}] at (10.6,5.5) {};

    \path (Ti) edge (Tip1);
    \path (Ti) -- (Tip1) node [midway,label=200:{ $T_{2}$}] {};
    \draw (Ti) -- (P) -- (Tip1) -- (Ti);
    \path (Tip1) edge (P);
    \path (Ti) edge (S1);
    \path (Tip1) edge (S1);
    \path (Ti) edge (S2);
    \path (Tip1) edge (S2);
    \path (Ti) edge (H);
    \path (Tip1) edge[line width=2.5pt] (H);

    \fill[shift=(P)] circle[radius=3.2pt];
    \fill[shift=(Ti)] circle[radius=1pt];
    \fill[shift=(Tip1)] circle[radius=3.2pt];

    \fill[line width=2.5pt, pattern color=black!50, dotted, pattern=dots] (8,10) -- (1,0) -- (12,3);
    \draw[line width=2.5pt, dotted] (Tip1) -- (S1);
    \draw[line width=2.5pt, dotted] (Tip1) -- (S2);
    \end{tikzpicture}

    \caption{An illustration of the induction in the proof of Proposition \ref{prop:rec_ram}
    for $i=2$. The dotted lines correspond to $S_3^1$ and $S_3^2$, the thick
    line corresponds to $S_3^3$, and the dotted area corresponds to
    $V_3 = \< P, S_3^3\>$.
    }
    \label{fig:rec}
\end{figure}
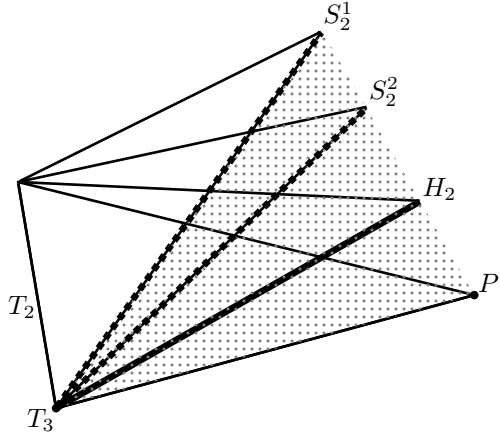

\begin{Proposition}\label{prop:rec_ram}
  Let $q$ be a prime power. Then there exists a constant $n_0(q)$
  such that for all $n \geq n_0(q)$ all points
  of a Boolean degree $1$ function on $J_q(n, 2)$ have weights
  in $\{ -\frac{q}{q+1}, -\frac{q-1}{q}, -\frac{1}{q^2+q}, 0, \frac{1}{q}, \frac{1}{q+1}, 1 - \frac{1}{q^2+q}, 1 \}$.
\end{Proposition}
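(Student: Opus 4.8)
The plan is to fix $Y$ on $J_q(n,2)$ (the reduction to $k=2$ is already in place), take an arbitrary point $P$, write $w=\wt(P)$, and show $w$ lies in the eight‑element set of the statement. First I would dispose of the ``large weight'' range: if $w>\frac{q}{q+1}$ or $w<-\frac{q-1}{q+1}$, then applying Corollary~\ref{cor:bnds_wt_n_eq_4} inside an arbitrary $4$-space through $P$ shows the restriction of $Y$ there is Example~(II) or~(IV) (up to complementation), so Proposition~\ref{prop:drudge_easy_ind} makes $Y$ globally a trivial example, and a direct computation of the point weights of the trivial examples (they are exactly the eight listed values) finishes this case. Hence I may assume every point weight lies in $[-\frac{q-1}{q+1},\frac{q}{q+1}]$, and it remains to prove $w\in\{-\frac{1}{q^2+q},0,\frac1q,\frac1{q+1}\}$, which is the intersection of the listed set with that interval.

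Next I would reduce to a finite ``colour palette''. Inside any $3$-space the line weights lie in $\{0,1\}$ and, by Lemma~\ref{lem:full_rk}, determine all point weights, so every point weight lies in a finite set depending only on $q$. Now Ramsey enters. Applying Theorem~\ref{thm:GLR_proj} to the $2$-colouring of the lines through $P$ by their weight gives, for $n$ large, an arbitrarily high-dimensional subspace through $P$ on which all such lines have a common weight $\eps\in\{0,1\}$; a further application of Theorem~\ref{thm:GLR_proj} in the quotient of that subspace by $P$, colouring each line through $P$ by the multiset of the weights of its other $q$ points, yields an arbitrarily high-dimensional $K\ni P$ on which additionally all lines through $P$ have a common weight-multiset $M$, so that $Y|_K$ attains at most $q+1$ distinct weights. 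One more application of Theorem~\ref{thm:GLR_proj} inside $K$, colouring points by their weight, produces an arbitrarily high-dimensional $U\subseteq K$ all of whose points share one weight, which must be $0$ or $\frac1{q+1}$ because $U$ contains lines; complementing $Y$ if necessary (this preserves the listed set and the interval above, merely interchanging $\{0,\frac1q\}$ with $\{\frac1{q+1},-\frac1{q^2+q}\}$ at the end) I may assume $U\subseteq\cP(0)$.

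The heart of the argument is the inductive construction of Figure~\ref{fig:rec}: using Theorems~\ref{thm:GLR_proj} and~\ref{thm:GLR_aff} I would build, for increasing $i$, subspaces $V_i$ together with a hyperplane $H_i\subseteq U$ of $V_i$ and $P\in V_i\setminus H_i$, maintaining that every point of $H_i$ has weight $0$ and every point of $V_i\setminus H_i$ has weight $w$. Passing from $(V_{i-1},H_{i-1})$ to $(V_i,H_i)$ amounts to adjoining to $H_{i-1}$ a suitable point of $U$; the newly created affine points lie on lines joining $P$ to new points of $H_i$, so a Ramsey argument in the quotient $U/H_{i-1}$ — colouring directions by the multiset of weights they contribute — is used to find a direction for which all these points again have weight $w$, the auxiliary subspaces $S_i^{j}$ of Figure~\ref{fig:rec} recording the relevant constraints. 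Once $V_i$ has dimension at least $2$, Lemma~\ref{lem:aff_weights} applied to $(V_i,H_i)$ gives $w\in\{0,\frac1q\}$; undoing the complementation replaces this by $\{\frac1{q+1},-\frac1{q^2+q}\}$ in the other case, so in every case $w$ lies in the required set.

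I expect the main obstacle to be precisely this last construction: keeping $P$ inside the weight-homogeneous ``affine part'' $V_i\setminus H_i$ while forcing both $H_i$ and $V_i\setminus H_i$ to be weight-constant, and in particular handling the branch in which no homogeneous extension exists. In that branch the lines through $P$ cannot all be made of weight $w$, and analysing the resulting weight pattern inside a $4$-space through $P$ with the tools for $n=4$ (Proposition~\ref{prop:metsch_bnds} and Corollary~\ref{cor:bnds_wt_n_eq_4}) should force that $4$-space restriction to be a trivial example containing a point-pencil, so that Proposition~\ref{prop:drudge_easy_ind} again makes $Y$ globally trivial and $w$ lies in the listed set. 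Everything else is bookkeeping; the cost of the nested Ramsey applications is only a larger constant $n_0(q)$, which does no harm.
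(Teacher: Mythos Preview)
Your plan diverges from the paper precisely at the ``heart of the argument'', and the divergence is a genuine gap. You propose to maintain, inductively, a subspace $V_i$ with a hyperplane $H_i\subseteq U\subseteq\cP(0)$ such that \emph{every} point of $V_i\setminus H_i$ has weight $w=\wt(P)$. But already at the base step this fails: a line from $P$ to a point $Q\in U$ has $q-1$ further affine points whose weights are the entries of your multiset $M$ with one $0$ removed, and nothing forces these to equal $w$. Your preliminary Ramsey passes make $M$ constant over all such lines, but they do not make $M$ equal to $\{0,w,\ldots,w\}$. Your fallback for the ``non-homogeneous'' branch is to invoke Corollary~\ref{cor:bnds_wt_n_eq_4} inside a $4$-space, but that corollary only bites when some point weight lies outside $[-\tfrac{q-1}{q+1},\tfrac{q}{q+1}]$, a range you have already excluded; so no point-pencil is produced and Proposition~\ref{prop:drudge_easy_ind} never fires.

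The paper avoids this trap by \emph{not} trying to make the affine part containing $P$ monochromatic. Instead, starting from a large $S_1^1\subseteq\cP(0)$, it builds a fan of $q$ hyperplanes $S_1^q,\ldots,S_q^q$ of some $V_q\ni P$, all passing through a common $(n_q{-}2)$-space $T_q$ and \emph{none} containing $P$, each monochromatic outside $S_1^q$ with weight in $\{0,\tfrac1q\}$ by Lemma~\ref{lem:aff_weights}; the affine Ramsey Theorem~\ref{thm:GLR_aff} is applied inside each newly chosen hyperplane $H_i$ (not containing $P$) to produce $S_{i+1}^{i+1}$. A further affine-Ramsey step inside $\langle P,T_q\rangle$ produces a monochromatic line there, which via a transversal line forces the number $\beta$ of $S_j^q$ of weight $\tfrac1q$ to be $0$ or $q-1$. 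Finally a line through $P$ meeting each $S_j^q$ exactly once and missing $T_q$ (Lemma~\ref{lem:pt_and_coline}) gives $\wt(P)=\wt(L)-\tfrac{\beta}{q}\in\{-\tfrac{q-1}{q},0,\tfrac1q,1\}$, and the remaining four values arise from the complementary case $S_1^1\subseteq\cP(\tfrac1{q+1})$. The key idea you are missing is that the induction grows the \emph{number} of monochromatic hyperplanes through $T_i$ (from $1$ up to $q$), shrinking dimensions as it goes, rather than growing a single monochromatic affine piece that is required to contain $P$; the weight of $P$ is read off only at the very end. Note also that this argument never uses Proposition~\ref{prop:metsch_bnds}, Corollary~\ref{cor:bnds_wt_n_eq_4}, or Proposition~\ref{prop:drudge_easy_ind}; those enter only in Lemma~\ref{lem:classical_weights_are_good}.
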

\begin{proof}
  First note that for fixed $q$, only a finite number of
  point weights can occur: Let $P$ be a point. Fix
  an arbitrary $3$-space $\Pi$ through $P$. By Lemma \ref{lem:full_rk},
  the point-line incidence matrix of $\Pi$ has full rank.
  In particular, any of the $2^{q^2+q+1}$
  possible subsets of the $q^2+q+1$ lines of $V$ determines
  the weights on the $q^2+q+1$ points of $V$. In other words,
  at most $\delta := 2^{q^2+q+1} (q^2+q+1)$ weights can occur.

  \medskip

  We will apply Theorem \ref{thm:GLR_proj} and then recursively Theorem \ref{thm:GLR_aff}.
  For this, we need to define several numbers.
  Let $s_1, \ldots, s_{q+1}$ be a sequence of integers which
  satisfies the following:
  \begin{enumerate}[(a)]
   \item We have $s_i \geq 2$ for all $i \in \{ 1, \ldots, q \}$.
   \item Put $s_{i} = \bar{n}_0(s_{i+1}, \delta, q)$
         for $i \in \{ 1, \ldots, q \}$,
         where $\bar{n}_0$ is as in Theorem \ref{thm:GLR_aff}.
   \item Put $s_{q+1} = 2$.
  \end{enumerate}
  Furthermore, put $n_i = s_i + 1$.
  We will show that we can take $n_0(q) = \tilde{n}_0(s_1, \delta, q)$, where
          $\tilde{n}_0$ is as in Theorem \ref{thm:GLR_proj}.
  For instance, $\delta = 896$ for $q=2$, so we find
  $n \geq \tilde{n}_0(\bar{n}_0(\bar{n}_0(2, 896, 2), 896, 2), 896, 2)$.

  \medskip

  Color the points by their weight $w$.
  By Theorem \ref{thm:GLR_proj},
  we find that some $\cP(w)$ contains a subspace $S_1^1$ of dimension $s_1$.
  As $s_1 \geq 2$, $w \in \{ 0, \frac{1}{q+1}\}$.
  As we can replace $Y$
  by its complement, we assume without loss of generality
  that $S_1^1 \subseteq \cP(0)$. Let $P$ be some point not in $S$.
  We want to show that $\wt(P) \in \{ -\frac{q-1}{q}, 0, \frac{1}{q}, 1\}$
  (the other four weights in the assertion occur for
  the complement of $Y$).

  Put $V_1 = \<S_1^1, P\>$,
  and let $T_1$ be some arbitrary $(s_1-1)$-space in $S_1^1$.
  We say that an $n_i$-space $V_i$ has property $C(i)$
  if the following holds:
  \begin{enumerate}[(i)]
   \item $V_i$ contains $P$,
   \item there exist $i$ distinct hyperplanes $S_j^i$ of $V_i$,
   where $j \in \{ 1, \ldots, i \}$,
   with either $S_j^i \setminus S_1^i \subseteq \cP(0)$ or
   $S_j^i \setminus S_1^i \subseteq \cP(\frac1q)$ for any $S_j^i$,
   \item we have $S_1^i \subseteq \cP(0)$,
   \item there exists an $(n_i{-}2)$-space $T_i$ contained in $S_j^i$ for all $j \in \{ 1, \ldots, i \}$,
   \item the point $P$ is not contained in any $S_j^i$ for any $j \in \{ 1, \ldots, i \}$.
  \end{enumerate}
  Note that $V_1$ has property $C(1)$.

  \medskip

  {\bf Claim 1:}
  Let $1 \leq i \leq q-1$.
  If there exists an $n_i$-space $V_i$ with property $C(i)$, then
  there exists an $n_{i+1}$-space $V_{i+1}$ with property $C(i+1)$:

  \medskip

  Figure \ref{fig:rec} illustrates the argument.
  As $\dim(V_i) = n_i$ and $\dim(T_i) = n_i-2$,
  in the quotient space of $T_i$ the hyperplanes in $V_i$ through $T_i$
  correspond to the $q+1$ points on a line.
  That is, there are precisely $q+1$ hyperplanes in $V_i$ through $T_i$.
  Out of these, $i$ corrspond to one of the $S_j^i$ and one
  corresponds to $\< P, T_i \>$. Let $H_i$
  be one of the $q-i$ remaining hyperplanes through $T_i$.
  As $i \leq q-1$, such an $H_i$ exists.
  By applying Theorem \ref{thm:GLR_aff}
  to the $(n_i{-}1)$-space $H_i$
  with $H_i$ in the role of $V$ and $T_i$ in the role of $H$,
  we see that, as $n_i{-}1 = s_i = \bar{n}_0(s_{i+1}, \delta, q)$,
  there exists an $s_{i+1}$-space $S_{i+1}^{i+1}$ in $H_i$
  (but not in $T_i = H_i \cap S_j^i$ for any $j \in \{ 1, \ldots, i \}$)
  with $S_{i+1}^{i+1} \setminus S_1^i \subseteq \cP(w)$ for some $w$.
  By Lemma \ref{lem:aff_weights} with $S_{i+1}^{i+1}$
  for $V$ and $S_{i+1}^{i+1} \cap S_1^{i+1}$ for $H$,
  $w \in \{ 0, \frac{1}{q}\}$.

  Put $V_{i+1} = \< P, S_{i+1}^{i+1}\>$, $T_{i+1} = S_{i+1}^{i+1} \cap T_i$,
  and $S_j^{i+1} = S_j^i \cap V_{i+1}$ for $1 \leq j \leq i$.
  Now $V_{i+1}$ has dimension $n_{i+1} = s_{i+1}+1$,
  and $T_{i+1}$ has dimension $s_{i+1}-1 = n_{i+1}-2$
  and lies in all $S_{i+1}^j$.
  Hence, (i), (iii), and (v) are clear.
  Since $S_j^i$ is a hyperplane of $V_i$,
  $S_j^{i+1}$ is a hyperplane of $V_{i+1}$.
  For $j, j' \in \{ 1, \ldots, i \}$,
  $S_j^{i+1} \cap S_{j'}^{i+1} = S_{i+1}^{i+1} \cap S_j^{i} \cap S_{j'}^{i} = T_{i+1}$.
  Hence, as $\dim(S_j^{i+1}) = n_{i+1}{-}1$ and $\dim(T_{i+1}) = n_{i+1}{-}2$,
  (ii) and (iv) hold for all $j, j' \in \{ 1, \ldots, i \}$.
  By construction, $S_{i+1}^{i+1}$ contains $T_{i+1}$ and is a hyperplane of $V_{i+1}$
  through $T_{i+1}$ distinct from all $S_j^{i+1}$ with $j \in \{ 1, \ldots, i \}$.
  Hence, (ii) and (iv) hold.
  This shows Claim 1.

  \medskip

  Hence, we find an $n_q$-space $V_{q}$ of $V$ which satisfies
  property $C(q)$.

  \medskip

  {\bf Claim 2:} The number of $S_j^{q}$, where $j \in \{ 2, \ldots, q \}$,
  with $S_j^q \setminus S_1^q \subseteq \cP(\frac{1}{q})$
  is either $0$ or $q-1$:

  \medskip

  Let $H_q = \< P, T_q \>$, that is, $H_q$ is the only
  hyperplane of $V_q$ through $T_q$ which is not one of the $S_j^q$.
  Recall that $n_q-1 = s_q \geq \tilde{n}_0(2, \delta, q)$.
  By applying Theorem \ref{thm:GLR_aff} to the $(n_q{-}1)$-space
  $H_q$ with $H_q$ in the role of $V$ and
  $T_q = H_q \cap S_1^q$ in the role of $H$,
  we find a line $S_{q+1}^q$ in $H_q$ (but not in $T_q$)
  with $S_{q+1}^q \setminus S_1^q \subseteq \cP(w)$ for some weight $w$.
  By Lemma \ref{lem:aff_weights} with $S_{q+1}^{q}$
  for $V$ and $S_{i+1}^{q} \cap S_1^{q}$ for $H$,
  $w \in \{ 0, \frac{1}{q}\}$.

  There exists a line $K$ which meets $S_{q+1}^q$
  in a point not in $T_q$ as we can pick a point $R$ of $S_{q+1}^q$
  not in $T_q$ and apply Lemma \ref{lem:pt_and_coline}.
  As $K$ meets $T_q$ trivially,
  the $q+1$ points of $K$ are $S_j^q \cap K$ for $j \in \{ 1, \ldots, q\}$
  and $R = S_{q+1}^q \cap K$. As $\wt(R) \in \{ 0, \frac1q\}$,
  we have $K \subseteq \cP(0) \cup \cP(\frac1q)$.
  Hence, $\wt(K) = \frac{\alpha}{q}$, where $\alpha$ is the number
  of $S_j^q$ with $S_j^q \setminus S_1^q \subseteq \cP(\frac1q)$
  for $j \in \{ 2, \ldots, q+1 \}$.
  As $\wt(K) \in \{ 0, 1 \}$, $\alpha = 0$ and $\alpha=q$ are the only solutions.
  This shows Claim 2.

  \medskip

  By Lemma \ref{lem:pt_and_coline}, there exists a
  line $L$ through $P$ in $V_q$ with no point in $T_q$.
  The $q+1$ points of $L$ are $S_j^q \cap K$
  for $j \in \{ 1, \ldots, q\}$ and $P$.
  Let $\beta$ denote the number of  $j \in \{ 1, \ldots q \}$ with
  $S_j^q \setminus S_1^q \subseteq \cP(\frac1q)$.
  Then $\wt(L) = \frac{\beta}{q} + \wt(P)$.
  By Claim 2, $\beta = 0$ or $\beta=q-1$.
  If $\beta = 0$ and $\wt(L) = 0$, then $\wt(P) = 0$.
  If $\beta = 0$ and $\wt(L) = 1$, then $\wt(P) = 1$.
  If $\beta = q-1$ and $\wt(L) = 0$, then $\wt(P) = -\frac{q-1}{q}$.
  If $\beta = q-1$ and $\wt(L) = 1$, then $\wt(P) = \frac{1}{q}$.
  Thus, $P$ has one of the allowed weights.
\end{proof}

\begin{proof}[Proof of Theorem \ref{thm:main}]
  As mentioned earlier, we only need to show the theorem for $k=2$.
  By Proposition \ref{prop:rec_ram}, if $n \geq n_0(q)$, then
  all point weights are in $\{ -\frac{q}{q+1}, -\frac{q-1}{q}, -\frac{1}{q^2+q}, 0, \frac{1}{q}, \frac{1}{q+1}, 1 - \frac{1}{q^2+q}, 1 \}$. Hence, we can apply Lemma \ref{lem:classical_weights_are_good}
  which shows the assertion.
\end{proof}

\section{Two-Intersection Sets}

In the 1970s Tallini Scafati investigated sets of type
$(\alpha, \beta)_{k-1}$, see \cite{TS1972,TS1976}.
A set of type $(\alpha, \beta)_{k-1}$, $\alpha < \beta$,
in the projective space $\PG(n-1, q)$ is a
set of points $\cP$ such that each $k$-space
intersects $\cP$ in either $\alpha$ or $\beta$ points.
In particular, unless $\cP$ is a point,
a hyperplane, or the complement of either of these,
$q$ has to be an odd square for $n \geq 4$ and $k=2$, see XIII) in \cite{TS1972}.
In this case there are essentially two choices for
$(\alpha, \beta)$. The existence of such sets
is unknown. Here we settle existence asymptotically for fixed $k$.

\begin{Theorem}\label{thm:two_int}
  Let $q$ be a prime power and let $k$ be an
  integer greater or equal to $2$.
  There exists a constant $n_0(k, q)$ such
  that any set of type $(\alpha, \beta)_{k-1}$
  in $\PG(n-1, q)$
  has either $\alpha = 0$ or $\beta = q+1$
  and is either a point, a hyperplane,
  the complement of a point, or
  the complement of a hyperplane.
\end{Theorem}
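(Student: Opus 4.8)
The plan is to reduce Theorem \ref{thm:two_int} to the already-proven Theorem \ref{thm:main} by showing that a set of type $(\alpha, \beta)_{k-1}$ in $\PG(n-1,q)$ is essentially the same object as a Boolean degree $1$ function on $J_q(n,k)$, up to affine renormalization. First I would recall the correspondence: a set $\cP$ of points of $\PG(n-1,q)$ is a subset of $\gauss{V}{1}$ for an $n$-dimensional vector space $V$, and "each $k$-space meets $\cP$ in $\alpha$ or $\beta$ points" means the function $g(S) = |S \cap \cP|$ takes only two values on $\gauss{V}{k}$. Writing $\wt(P) = \tfrac{1}{\beta - \alpha} x_P^+ - \tfrac{\alpha}{(\beta-\alpha)[k]}$ (or more cleanly, $f(S) = \tfrac{g(S) - \alpha}{\beta - \alpha}$), one checks that $f$ is an affine degree $1$ function of the point indicators that takes values in $\{0,1\}$ on all $k$-spaces, i.e.\ $f$ is a Boolean degree $1$ function on $J_q(n,k)$. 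Here one uses that a $k$-space has exactly $[k]$ points, so the constant $\alpha$ can be absorbed by subtracting $\tfrac{\alpha}{[k]}$ from every point weight; this is the standard observation that Boolean degree $1$ functions are affinely equivalent to two-valued linear forms.

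Next I would invoke Theorem \ref{thm:main}: choosing $n_0(k,q) = c_0(k,q) + k$ so that $\max(k, n-k) \geq n - k \geq c_0(k,q)$ holds, every Boolean degree $1$ function on $J_q(n,k)$ is one of the trivial examples (I)--(V). It then remains to translate each trivial example back into the language of $(\alpha,\beta)_{k-1}$ sets and read off $\alpha$ and $\beta$. The empty set (I) and the full space give $\cP = \emptyset$ or $\cP = \gauss{V}{1}$, which are degenerate (only one intersection value) and thus excluded once we require $\alpha < \beta$; a point-pencil (II) corresponds, under the correspondence above, to $\cP$ being a single point $P$, with $\alpha = 0$ (a $k$-space not through $P$) and $\beta = 1$; example (III), all $k$-spaces in a hyperplane $H$, corresponds to $\cP = H$, with $\alpha = [k-1]$ and $\beta = [k] = q[k-1]+1$ — wait, more carefully, a $k$-space meets the hyperplane $H$ in either a $(k-1)$-space ($[k-1]$ points) or a $k$-space contained in $H$; but actually every $k$-space meets $H$ in at least a $(k-1)$-space, so $\cP = H$ gives intersection sizes $[k-1]$ and $[k]$, and after complementing $\cP$ we get $\beta = q+1$ only in the line case $k=2$. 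For $k = 2$ specifically, $\cP = H$ gives $\alpha = 1, \beta = q+1$, matching the statement; example (IV) gives $\cP = H \cup \{P\}$ with $P \notin H$, and example (V) gives the complements of all of these, so that $\alpha = 0$ (complement of a non-point-pencil) or $\beta = q+1$ (complement making the small value into $q+1$). In every surviving case $\cP$ is a point, a hyperplane, or the complement of one of these, which is exactly the conclusion.

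The main obstacle I expect is bookkeeping the correspondence cleanly for general $k \geq 2$: the statement of Theorem \ref{thm:two_int} asserts "$\alpha = 0$ or $\beta = q+1$," and $\beta = q+1$ is naturally a line-geometry ($k=2$) phenomenon — for larger $k$ a hyperplane meets a $k$-space in $[k-1]$ or $[k]$ points, neither of which is $q+1$ unless $k=2$. So I would need to be careful: either the intended reading is that after passing to the appropriate quotient/section the two-intersection set on lines is what matters, or one argues that for $n \geq n_0(k,q)$ the only two-intersection sets are governed by the $k=2$ reduction via Theorem 4.15 of \cite{FI2019} (which propagates $B(n,2,q)$ up to $B(n,k,q)$), and the classification of trivial examples for $J_q(n,k)$ still yields "point or hyperplane or complement thereof" as point sets $\cP$, with the $(\alpha,\beta)$ values being the induced ones. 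I would resolve this by stating the reduction carefully: a set of type $(\alpha,\beta)_{k-1}$ in $\PG(n-1,q)$ which is neither a point, a hyperplane, nor a complement of these would, via $f(S) = (g(S)-\alpha)/(\beta-\alpha)$, yield a non-trivial Boolean degree $1$ function on $J_q(n,k)$, contradicting Theorem \ref{thm:main} for $n$ large; the determination of $\alpha, \beta$ in the surviving cases is then the routine computation of intersection numbers of $k$-spaces with a point or a hyperplane, which I would simply state rather than belabor. The one genuinely delicate point is checking that the affine shift by $\alpha/[k]$ lands us in the class of functions covered by Notation \ref{not:boolean_to_set} and the definition of Boolean degree $1$ — i.e.\ that a two-valued affine form on point indicators really is a degree $1$ polynomial in the sense used, which holds because adding a constant keeps the degree at $1$.
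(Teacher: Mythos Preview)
Your approach is essentially the paper's: define $f(S)=\frac{|S\cap\cP|-\alpha}{\beta-\alpha}$, observe this is a Boolean degree $1$ function on $J_q(n,k)$, and invoke Theorem~\ref{thm:main}; the paper's proof is exactly this in two sentences, without spelling out the back-translation of the trivial examples. Your concern that the clause ``$\alpha=0$ or $\beta=q+1$'' is problematic for $k\geq 3$ is a legitimate observation about the \emph{statement} (e.g.\ for $\cP$ a hyperplane one gets $(\alpha,\beta)=([k-1],[k])$), not a defect in the reduction, and the paper does not address it either; the classification of $\cP$ as a point, hyperplane, or complement thereof follows exactly as you outline, and Example~(IV) is correctly excluded because $H\cup\{P\}$ has three intersection sizes with $k$-spaces, not two.
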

\begin{proof}
  Put weights on $\cP$ and its complement
  such that $\wt(S) = 0$ for $k$-spaces
  $S$ which intersect $\cP$ in $\alpha$
  points and $\wt(S) = 1$ for $k$-spaces
  which intersect $\cP$ in $\beta$ points.
  Then the assertion follows from Theorem \ref{thm:main}.
\end{proof}

Alternatively, one can use Theorem \ref{thm:GLR_proj} directly.
Note that a similar result holds true for affine
two-intersection sets using Theorem \ref{thm:GLR_aff}.

\section{Small Dimension}

Blokhuis, De Boeck, and D'haeleseer showed that
a non-trivial Boolean degree $1$ function $Y$ of size $x \gauss{n-1}{k-1}$
has $x > C q^{n/2 - k + 1/2}$,
see \cite{BDBD2019} for their precise statement.
Later De Beule, Mannaert, and Storme proved
$x > C q^{n-3k}$ in \cite{DBMS2022}.
Here we improve this to $x > C q^{n-3k+2}$
using Proposition \ref{prop:wt_bnds_gen}.%
\footnote{After the submission of this document, De Beule, Mannaert, and Storme announced
a further improvement of $x > C q^{n - 2.5k + 0.5}$ for $n \geq 3k+2$ in \cite{DBMS2024}.}

\begin{Theorem}\label{thm:bnd_x}
  Let $Y$ be a Boolean degree $1$
  function on $J_q(n, k)$ of size $x \gauss{n-1}{k-1}$.
  If $Y$ is not Example (I) or Example (II), then
  \[
   x \geq \frac{[n-1]}{q[k-1]^2[k]} > \frac{1}{8} q^{n - 3k + 2}.
  \]
\end{Theorem}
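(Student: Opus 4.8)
The plan is to feed a single point of sufficiently negative weight into Proposition~\ref{prop:wt_bnds_gen}. Rearranging the left-hand inequality in \eqref{eq:wt_bnd} for a point $P$ with $\wt(P)<0$ gives
\[
  x \ \geq\ \frac{-\wt(P)\,\bigl([n-1]-[k-1]\bigr)}{[k-1]},
\]
so it is enough to exhibit a point of weight at most about $-\tfrac{1}{q[k-1][k]}$. First I would locate a heavy point: since $Y$ is not Example~(I) there is a $k$-space $S_1\in Y$, and from $\sum_{P\in S_1}\wt(P)=\wt(S_1)=1$ together with $|S_1|=[k]$ some point has weight $\geq 1/[k]$; fix a point $P$ of maximal weight $M$, so $M\geq 1/[k]$.

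Now I would split according to whether $P$ already forces a point-pencil. \textbf{Case (a): some $k$-space $S\ni P$ satisfies $\wt(S)=0$.} Then the $[k]-1=q[k-1]$ points of $S\setminus\{P\}$ have weights summing to $-M\leq -1/[k]$, so one of them has weight at most $-M/(q[k-1])\leq -1/(q[k-1][k])$; plugging it into the displayed bound yields $x\geq\frac{[n-1]-[k-1]}{q[k-1]^2[k]}$. Since $[n-1]-[k-1]=q^{k-1}[n-k]$ this already exceeds $\tfrac18 q^{n-3k+2}$ (via $(q-1)^3\geq q^3/8$); a slightly sharper extraction of the negative point — using that the upper bound on the heaviest point is not nearly tight unless $S\setminus\{P\}$ is very negative — is what recovers the marginally stronger constant $\frac{[n-1]}{q[k-1]^2[k]}$, but either form gives the asserted $q^{n-3k+2}$ bound.

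\textbf{Case (b): every $k$-space through $P$ lies in $Y$}, i.e.\ $Y$ contains the point-pencil through $P$; this is the main obstacle. For each $k$-space $S\ni P$ the non-$P$ points of $S$ have weights summing to the constant $1-M$, and projecting from $P$ this says the induced weight function on the quotient $V/P$ (of dimension $n-1$) has every $(k-1)$-space of constant weight $1-M$; a rank argument (the higher-dimensional analogue of Lemma~\ref{lem:full_rk}) then forces all weights on $V/P$ to equal $\tfrac{1-M}{[k-1]}$, whence $x=M+\frac{[n-1]}{[k-1]}(1-M)=1+\frac{[n-1]-[k-1]}{[k-1]}(1-M)$. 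If $Y$ is the whole space then $x=[n]/[k]$ and we are done; if $Y$ equals the point-pencil then $Y$ is Example~(II), which is excluded; so $Y$ properly contains the point-pencil, forcing $x>1$ and hence $M<1$. Counting the $q^k$ $k$-spaces avoiding $P$ inside the $(k+1)$-space $\langle P,S^{\ast}\rangle$ spanned by $P$ and some $S^{\ast}\in Y$ with $P\notin S^{\ast}$ (each of weight $0$ or $1$, at least one of weight $1$, with total weight $q^{k-1}[k]\,\tfrac{1-M}{[k-1]}$) gives $1-M\geq\frac{[k-1]}{q^{k-1}[k]}$, hence $x>\frac{[n-k]}{[k]}$, which beats the claimed bound for $k\geq 3$. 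The leftover possibility $k=2$ in Case~(b) I would dispose of using the small-dimension results of Drudge and Metsch (Propositions~\ref{prop:drudge_easy_ind} and~\ref{prop:metsch_bnds}): a Boolean degree~$1$ function properly containing a line-pencil is, in low dimension, Example~(IV) or a complement, all of which have $x$ far larger than $\frac{[n-1]}{q[k-1]^2[k]}$.

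Finally, checking $\frac{[n-1]}{q[k-1]^2[k]}>\tfrac18 q^{n-3k+2}$ is a routine estimate once one writes $[n-1]-[k-1]=q^{k-1}[n-k]$ and uses crude bounds on $q$-binomials. The step I expect to be genuinely delicate is Case~(b): controlling the configurations in which the heaviest point already carries its entire pencil, and in particular handling $k=2$ without circularity relative to the earlier classification.
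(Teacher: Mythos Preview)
Your Case~(a) is exactly the paper's argument, and Proposition~\ref{prop:wt_bnds_gen} is used in the same way. The divergence is in how you handle the possibility that $Y$ contains a point-pencil.

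The paper disposes of this in one line: if $Y$ contains the pencil through some point $P$, replace $Y$ by $Y' := Y \setminus \{\text{pencil}(P)\}$. Since the indicator of a point-pencil is degree~$1$, $Y'$ is again a Boolean degree~$1$ function, now with parameter $x-1$. Moreover $Y'$ is nonempty (else $Y$ would be Example~(II)), and $Y'$ contains no point-pencil at all: any pencil through $P' \neq P$ meets $\text{pencil}(P)$ in every $k$-space through the line $\langle P, P'\rangle$, so it cannot lie in $Y \setminus \text{pencil}(P)$. Hence after a single subtraction one is automatically in your Case~(a), and the bound for $Y'$ gives the bound for $Y$ with a spare~$+1$. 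This replaces your entire Case~(b).

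Your Case~(b) route is in fact salvageable, but it is a detour. The ``rank argument'' you invoke is correct (for $2 \le k-1 \le n-2$ the point--$(k{-}1)$-space incidence matrix $N$ satisfies $N^TN = aI + bJ$ with $a>0$, so $Nw$ constant forces $w$ constant), and your counting in $\langle P, S^\ast\rangle$ then yields $x \geq 1 + [n-k]/[k]$. This already beats $[n-1]/(q[k-1]^2[k])$ for \emph{every} $k \geq 2$, including $k=2$: one checks directly that $q(q+1+[n-2]) \geq [n-1]$. So your worry about $k=2$ is unfounded, and the appeal to Propositions~\ref{prop:drudge_easy_ind} and~\ref{prop:metsch_bnds} is both unnecessary and, as you suspected, not straightforward to make work for general $n$.

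Finally, your observation that the direct substitution gives $([n-1]-[k-1])/(q[k-1]^2[k])$ rather than $[n-1]/(q[k-1]^2[k])$ is accurate; the paper's own computation has the same slack, and either form suffices for the $\tfrac18 q^{n-3k+2}$ conclusion.
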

\begin{proof}
  Without loss of generality assume that $Y$ does not contain a point-pencil
  (otherwise, remove the point-pencil from $Y$).
  A $k$-space $S$ in $Y$ has weight $1$, so we find one point $P$ in $S$
  with $\wt(P) \geq \frac{1}{[k]}$. As $Y$ does not contain point-pencils,
  the point $P$ must lie on another $k$-space with weight $0$. Hence,
  the average weight of the points of $S$ minus $P$ is $\frac{-1}{q[k-1][k]}$.
  Let $Q$ be a point with at most this weight.
  Then the first inequality in Proposition \ref{prop:wt_bnds_gen}
  shows the first inequality in the assertion. The second inequality
  follows from $q^{a-1} < [a] < \frac{q}{q-1} \cdot q^{a-1} \leq 2q^{a-1}$.
\end{proof}

Example (III) has $x \sim C q^{n-2k}$.
Hence, the argument is tight for $k=2$, $n$ fixed and $k \rightarrow \infty$.

\section{On Mazzocca and Tallini} \label{sec:mt}

Let $L \in R\cL(q)$.
Let $\Pi = \{ \Pi_1, \ldots, \Pi_\ell \}$ a partition
of the $d$-spaces (or rank $d$ elements) of $L$.
We say that $\Pi$ is a {\it $(k, d, \ell; q)$-blocking set partition}
if any $k$-space in $L$ contains at least one element
of each $\Pi_j$. Mazzocca and Tallini proved in \cite{MT1985}
that $(k, 1, 2; q)$-blocking set partitions do not exist
for $n \geq \tilde{n}_0(k, q)$ for some $\tilde{n}_0(n, q)$.
Following the same argument
as in \cite{MT1985} using Theorem \ref{thm:voigt_ramsey},
we find the following result.

\begin{Theorem}
  Let $q$ be a prime power. For every pair of positive integers $k$ and $\ell$,
  there exists an $n_0(k, \ell, q)$ such that
  no lattice $L \in R\cL(q)$ with $\rank(L) \geq n_0(k, \ell, q)$
  possesses a $(k, d, \ell; q)$-blocking set partition.
\end{Theorem}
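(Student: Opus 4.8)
The plan is to mimic the Mazzocca--Tallini argument from \cite{MT1985}, but feed it Voigt's lattice Ramsey theorem (Theorem \ref{thm:voigt_ramsey}) in place of the weaker projective/affine Ramsey statements, so that the conclusion holds not just for the full subspace lattice of a vector space but for an arbitrary $L \in R\cL(q)$ and for an arbitrary number $\ell$ of partition classes (and, incidentally, for arbitrary fixed $d$). Suppose for contradiction that $\Pi = \{\Pi_1, \ldots, \Pi_\ell\}$ is a $(k, d, \ell; q)$-blocking set partition of the rank-$d$ elements of some $L \in R\cL(q)$ of large rank. The $\ell$-coloring $\Delta$ of the rank-$d$ elements of $L$ that assigns to a $d$-space $D$ the index $j$ with $D \in \Pi_j$ is then a $\delta$-coloring with $\delta = \ell$ in the sense needed, except that Voigt's theorem colors \emph{all} elements of $L$, not just the rank-$d$ ones; this is harmless, since one may extend $\Delta$ arbitrarily (say, constantly) on elements of other ranks without affecting monochromaticity at rank $d$.

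Next I would fix a target rank. One wants a sublattice $K \in L$ with $\dim(K) = k$ that is monochromatic in Voigt's sense, meaning all pairs of rank-$d$ subspaces $T_1, T_2 \subseteq K$ (with $T_1, T_2$ in $L$) have the same color. Choosing the rank parameter in Theorem \ref{thm:voigt_ramsey} to be $k$ and the color parameter to be $\ell$, we obtain such a monochromatic $K$ once $\rank(L) \geq n_0(k, \ell, q)$. Fix such a $K$ and let $c$ be the common color of all rank-$d$ elements of $L$ lying inside $K$; say $c$ corresponds to the class $\Pi_c$. The key point is then purely combinatorial: the blocking-set-partition hypothesis requires that $K$, being a $k$-space in $L$, contains at least one element of \emph{every} class $\Pi_j$, in particular of some class $\Pi_{j'}$ with $j' \neq c$ (here we use $\ell \geq 2$). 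But every rank-$d$ element of $L$ contained in $K$ has color $c$, hence lies in $\Pi_c$, not in $\Pi_{j'}$. This is the desired contradiction, and we may take $n_0(k, \ell, q) = n_0(k, \ell, q)$ from Theorem \ref{thm:voigt_ramsey}.

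The main obstacle, such as it is, is bookkeeping rather than mathematics: one must check that the notion of "monochromatic" in Theorem \ref{thm:voigt_ramsey} is exactly what the blocking-set argument needs, i.e.\ that it is the rank-$d$ elements \emph{of $L$} inside $K$ (and not, say, all rank-$d$ subspaces of the ambient vector space) that are forced to share a color, and that $K$ being an element of $L$ of dimension $k$ is precisely the object the blocking-set partition must hit in all classes. One should also note the degenerate cases: if $\ell = 1$ there is nothing to prove (every partition is trivially a $(k,d,1;q)$-blocking set partition and the statement as phrased with $\ell \geq 1$ should be read for $\ell \geq 2$, or else reinterpreted), and if $L$ has fewer than one rank-$d$ element or no rank-$k$ element then the hypothesis is vacuous; these do not arise once $\rank(L)$ is large. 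No delicate estimate is needed, since all the quantitative content is hidden inside the Ramsey number supplied by Voigt's theorem.
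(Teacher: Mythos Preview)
Your proposal is correct and matches the paper's approach exactly: the paper does not spell out a proof but simply states that the result follows ``following the same argument as in \cite{MT1985} using Theorem \ref{thm:voigt_ramsey},'' which is precisely the coloring-plus-Ramsey contradiction you wrote out. Your bookkeeping remarks (extending the coloring to all ranks, the $\ell \geq 2$ caveat) are appropriate and fill in details the paper leaves implicit.
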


\section{Future Work}

Let us conclude with some suggestions for future work.
\begin{itemize}
 \item Determine the smallest value for $n_0(k, q)$ for which $B(n_0(k, q), k, q)$ holds.
      The implicit bound in our proof presented here on $n_0(k, q)$ is very large,
      while it is known that $n_0(2, 2) = 4$ and $n_0(2, q) = 5$
      for $q \in \{ 3, 4, 5 \}$. There has been some recent
      interest in giving good asymptotic bounds on $q$-analogs
      of Ramsey numbers, see \cite{FY2023}. Maybe these efforts
      can lead to better estimates for $n_0(k, q)$.
      Disproving $B(5, 2, q)$ for any $q$
      would also be very interesting.
 \item Show a Friedgut-Kalai-Naor (FKN) theorem, see \cite{FKN2002}, for vector spaces.
 That is, show that an almost Boolean degree $1$ function is
 close to a union of Boolean degree $1$ functions, cf. \cite{Filmus2016}. The problem
 for our technique used here is that the number of possible weights is no longer finite.
 \item Extend the argument to Boolean degree $d$ functions on $J_q(n, k)$.
 Here one puts weight on $d$-spaces instead of $1$-spaces. Note that
 such a characterization of low degree Boolean functions is known
 for the Johnson scheme $J(n, k)$, see \cite{Filmus2023,FI2019a}.
 Using Theorem \ref{thm:voigt_ramsey}, a variant of Proposition \ref{prop:rec_ram}
 might be still feasible, but there are no geometrical results
 comparable to Proposition \ref{prop:drudge_easy_ind}
 and Proposition \ref{prop:metsch_bnds}.
 \item Extend the argument to closely related structures,
 for instance bilinear forms or the class $R\cL(q)$.
\end{itemize}

\paragraph*{Acknowledgments}
The author thanks two referees, Anurag Bishnoi, Jozefien D'haeseleer,
Sasha Gavrilyuk, Jonathan Mannaert, Morgan Rodgers, and, in particular, Yuval Filmus for their comments
related to this document.
We thank Lukas Kühne and Hans Jürgen Prömel for helping with
access to \cite{Voigt1978}.


\end{document}